\documentclass[11pt]{amsart}
\usepackage{amsfonts}
\usepackage{amsmath}
\usepackage{amsthm}
\usepackage{labelfig}
\usepackage{epsfig}
\usepackage{epstopdf}

\usepackage{url}
\usepackage[all]{xy}
\usepackage{latexsym}
\usepackage{amssymb}
\usepackage[cp850]{inputenc}
\usepackage{amsfonts}
\usepackage{graphicx}
\usepackage{dsfont}

\usepackage[usenames]{color}

\newtheorem{sat}{Theorem}[section]		
\newtheorem{lem}[sat]{Lemma}
\newtheorem{kor}[sat]{Corollary}

\newtheorem*{defi*}{Definition}			
\newtheorem*{bei*}{Example}
\newtheorem*{sat*}{Theorem}				
\newtheorem*{kor*}{Corollary}
\newtheorem*{rmk*}{Remark}				
	
\newtheorem*{quest*}{Question}


\let\ssection=\section
\renewcommand{\section}{\setcounter{equation}{0}\ssection}

\newtheorem*{namedtheorem}{\theoremname}
\newcommand{\theoremname}{testing}
\newenvironment{named}[1]{\renewcommand{\theoremname}{#1}\begin{namedtheorem}}{\end{namedtheorem}}

\theoremstyle{remark}
\newtheorem*{bem}{Remark}

\newtheorem*{namedtheoremr}{\theoremnamer}
\newcommand{\theoremnamer}{testing}

\newcommand{\BR}{\mathbb R}			
\newcommand{\BN}{\mathbb N}			\newcommand{\BQ}{\mathbb Q}

\newcommand{\CC}{\mathcal C}

		\newcommand{\CL}{\mathcal L}
\newcommand{\CM}{\mathcal M}

\newcommand{\CS}{\mathcal S}		
\newcommand{\CU}{\mathcal U}

\newcommand{\actson}{\curvearrowright}
\newcommand{\D}{\partial}

\DeclareMathOperator{\Out}{Out}		
\DeclareMathOperator{\Id}{Id}		
\DeclareMathOperator{\Isom}{Isom}	

\DeclareMathOperator{\Map}{Map}

\newcommand{\comment}[1]{}

\DeclareMathOperator{\Homeo}{Homeo}
\DeclareMathOperator{\Aut}{Aut}

\DeclareMathOperator{\Thu}{Thu}

\DeclareMathOperator{\Inn}{Inn}

\newcommand{\fsubd}{\mathrel{{\scriptstyle\searrow}\kern-1ex^d\kern0.5ex}}
\newcommand{\bsubd}{\mathrel{{\scriptstyle\swarrow}\kern-1.6ex^d\kern0.8ex}}


\begin{document}

\title[]{Counting curves, and the stable length of currents}
\author{Viveka Erlandsson}
\address{Aalto Science Institute, Aalto University}
\email{viveca.erlandsson@aalto.fi}
\author{Hugo Parlier}
\address{Department of Mathematics, University of Fribourg}
\email{hugo.parlier@unifr.ch}
\author{Juan Souto}
\address{IRMAR, Universit\'e de Rennes 1}
\email{juan.souto@univ-rennes1.fr}
\thanks{The first two authors acknowledge support from Swiss National Science Foundation grant number PP00P2\textunderscore 153024. Moreover, this material is based upon work supported by the National Science Foundation under Grant No. DMS-1440140 while the third author was in residence at the Mathematical Sciences Research Institute in Berkeley, California, during the Fall 2016 semester. Finally, the authors acknowledge support from U.S. National Science Foundation grants DMS 1107452, 1107263, 1107367 RNMS: Geometric structures And Representation varieties (the GEAR Network).}

\begin{abstract}
Let $\gamma_0$ be a curve on a surface $\Sigma$ of genus $g$ and with $r$ boundary components and let $\pi_1(\Sigma)\actson X$ be a discrete and cocompact action on some metric space. We study the asymptotic behavior of the number of curves $\gamma$ of type $\gamma_0$ with translation length at most $L$ on $X$. For example, as an application, we derive that for any finite generating set $S$, of $\pi_1(\Sigma)$ the limit 
$$\lim_{L\to\infty}\frac 1{L^{6g-6+2r}}\{\gamma\text{ of type }\gamma_0\text{ with }S\text{-translation length}\le L\}$$
exists and is positive. The main new technical tool is that the function which associates to each curve its stable length with respect to the action on $X$ extends to a (unique) continuous and homogenous function on the space of currents. We prove that this is indeed the case for any action of a torsion free hyperbolic group.
\end{abstract}
\maketitle

\section{}

Let $\Sigma$ be a compact surface, possibly with non-empty boundary, with $\chi(\Sigma)<0$, and other than a three holed sphere. We denote by 
$$\Map(\Sigma)=\Homeo(\Sigma)/\Homeo_0(\Sigma)$$ 
the (full) mapping class group and recall that it can be identified with a subgroup of the group $\Out(\pi_1(\Sigma))$ of exterior automorphisms of the fundamental group of the surface $\Sigma$. If $\Sigma$ is closed, then one has in fact that $\Map(\Sigma)=\Out(\pi_1(\Sigma))$.

\begin{bem}
The reader might have a preference for the orientation preserving mapping class group or, in the presence of boundary, might have a special spot in her or his heart for the pure mapping class group or for the group of isotopy classes of homeomorphisms which fix the boundary pointwise. All the results here hold if we replace the full mapping class group $\Map(\Sigma)$ by any of these groups or by any of their finite index subgroups.
\end{bem}

Free homotopy classes of curves in $\Sigma$ are naturally identified with conjugacy classes in the fundamental group. We will pass freely from one point of view to the other and, unless we want to stress a given point, refer to them simply as {\em curves}, and we say that a curve has a certain property if the individual representatives do. A curve is {\em essential} if it (its representatives) are neither homotopically trivial nor homotopic to a boundary component. The mapping class group acts on the set $\CS=\CS(\Sigma)$ of all essential curves and we say that two elements in $\CS$ in the same $\Map(\Sigma)$-orbit are {\em of the same type}. Given an essential curve $\gamma_0$, let $\CS_{\gamma_0}=\Map(\Sigma)\cdot \gamma_0$ be the set of all curves of type $\gamma_0$.

In this note we are interested in counting the number of elements in $\CS_{\gamma_0}$ which have translation length at most $L$ with respect to some action on a hyperbolic space. To be more precise, suppose that $\pi_1(\Sigma)\actson X$ is an action by isometries on some geodesic metric space $X=(X,d_X)$. Given a curve $\gamma\in\CS$, consider it as a conjugacy class in $\pi_1(\Sigma)$, choose a representative (which we still refer to as $\gamma$) and set
$$\ell_X(\gamma)=\inf_{x\in X}d_X(x,\gamma(x)).$$
The quantity $\ell_X(\gamma)$, which we call the {\em length of $\gamma$ with respect to the action $\pi_1(\Sigma)\actson X$}, does not depend on the chosen representative of the conjugacy class $\gamma$.

The following is our main theorem:

\begin{sat}\label{sat1}
Let $\Sigma$ be a compact surface with genus $g$ and $r$ boundary components and assume that $3g+r> 3$. Further let $\pi_1(\Sigma)\actson X$ be a discrete and cocompact isometric action on a geodesic metric space $X$, $\gamma_0$ an essential curve in $\Sigma$, and $\CS_{\gamma_0}=\Map(\Sigma)\cdot\gamma_0$ the set of all curves of type $\gamma_0$. Then the limit
$$\lim_{L\to\infty}\frac 1{L^{6g-6+2r}}\vert\{\gamma\in\CS_{\gamma_0}\,\vert\,\ell_X(\gamma)\le L\}\vert$$
exists and is positive. 
\end{sat}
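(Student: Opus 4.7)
The plan is to work in Bonahon's space of geodesic currents $\CC(\Sigma)$ and deduce the count from general equidistribution results in that setting. Essential curves embed canonically in $\CC(\Sigma)$, the mapping class group acts continuously on $\CC(\Sigma)$, and — by the counting framework going back to Mirzakhani and later extended by Erlandsson--Souto — whenever $F\colon\CC(\Sigma)\to\BR_{\ge 0}$ is continuous, homogeneous of degree $1$, and strictly positive away from the zero current, the counting function
$$\#\{\gamma\in\CS_{\gamma_0}\,\vert\,F(\gamma)\le L\}$$
is asymptotic to $c\cdot L^{6g-6+2r}$ with $c>0$. The constant $c$ is expressible as a Thurston measure of the unit ball $\{F\le 1\}$ times a factor depending only on the $\Map(\Sigma)$-orbit of $\gamma_0$. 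Thus Theorem~\ref{sat1} reduces to exhibiting such an $F$ restricting to $\ell_X$ on $\CS$.

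This is exactly the main technical result announced in the abstract: $\pi_1(\Sigma)$ is torsion-free hyperbolic (free if $\partial\Sigma\ne\emptyset$, a closed surface group otherwise) and $\pi_1(\Sigma)\actson X$ is discrete and cocompact, so the translation length $\gamma\mapsto\ell_X(\gamma)$ admits a unique continuous homogeneous extension $\tilde\ell_X\colon\CC(\Sigma)\to\BR_{\ge 0}$. Producing $\tilde\ell_X$ is, in my view, the genuine obstacle. I would expect the proof to rest on approximating the orbit of the axis of $\gamma$ inside $X$ by pieces sitting in a fixed compact fundamental domain, bounding the error via the Milnor--\v Svarc quasi-isometry between $X$ and $\pi_1(\Sigma)$, and ultimately controlling $\ell_X(\gamma)$ by an intersection number with an auxiliary current, so that continuity on the dense subset of weighted curves propagates to all of $\CC(\Sigma)$.

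With $\tilde\ell_X$ in hand, it remains to verify strict positivity on $\CC(\Sigma)\setminus\{0\}$. Milnor--\v Svarc furnishes a $(\pi_1(\Sigma))$-equivariant quasi-isometry between $X$ and any Cayley graph of $\pi_1(\Sigma)$ for a finite generating set $S$, so on conjugacy classes $\ell_X$ and the stable $S$-translation length are bi-Lipschitz comparable; by the uniqueness of the continuous homogeneous extension this comparison survives on $\CC(\Sigma)$. In turn, any such stable word length is bi-Lipschitz comparable to the hyperbolic length $i(\cdot,L_\sigma)$ associated to a fixed hyperbolic metric $\sigma$ on $\Sigma$ via its Liouville current $L_\sigma$, and the latter is positive on every non-zero current. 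Hence $\tilde\ell_X>0$ off the origin, and applying the Erlandsson--Souto counting theorem to $F=\tilde\ell_X$ yields both the existence and the positivity of the limit asserted in Theorem~\ref{sat1}.
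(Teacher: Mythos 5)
Your overall counting framework (equidistribution of the rescaled orbit measures $\nu_L^{\gamma_0}$ towards a multiple of the Thurston measure, then evaluation on the unit ball of a continuous homogeneous length function) is the right one and matches the paper. But the step you yourself identify as ``the genuine obstacle'' is carried out incorrectly: you claim that the translation length $\gamma\mapsto\ell_X(\gamma)$ itself admits a continuous \emph{homogeneous} extension $\tilde\ell_X$ to $\CC(\Sigma)$, and you attribute this to the abstract. The abstract asserts this only for the \emph{stable} length $\Vert\gamma\Vert_X=\lim_n\frac1n\ell_X(\gamma^n)$, and the paper's introduction gives an explicit counterexample showing that no such extension of $\ell_X$ exists in general: for the free group on $a,b$ (which is $\pi_1$ of a surface with boundary) acting on the Cayley graph of $S=\{a^{\pm1},b^{\pm1},a^{\pm5}\}$, the currents $\frac1{5n}a^{5n}b$ converge to $a$, yet $\frac1{5n}\ell_S(a^{5n}b)=\frac{n+1}{5n}\to\frac15\neq 1=\ell_S(a)$. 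Homogeneity forces $F(\gamma^n)=nF(\gamma)$ on currents, which $\ell_X$ simply fails to satisfy for general discrete cocompact actions. So the function $F$ you want to feed into the counting machine does not exist, and your reduction collapses at that point.

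The repair is exactly the paper's two-step route: first extend the stable length $\Vert\cdot\Vert_X$ continuously and homogeneously to $\CC^+(\Gamma)$ (this is Theorem \ref{sat-length}, proved via the geodesic flow of the hyperbolic group, transversals, and a subadditivity argument --- not via an auxiliary current and intersection numbers, which is precisely the tool that is unavailable here); this yields the counting statement for $\Vert\cdot\Vert_X$ (Theorem \ref{sat2}). Then one transfers the count back to $\ell_X$ using the hyperbolicity of $X$: concatenating translates of a minimizing segment for $\gamma$ produces an $\ell_X(\gamma)$-local geodesic, hence a uniform quasi-geodesic, which gives $\ell_X(\gamma)\ge\Vert\gamma\Vert_X\ge(1-\epsilon)\ell_X(\gamma)$ whenever $\ell_X(\gamma)\ge L(\epsilon)$ (Lemma \ref{lem-compare lengths}). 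This sandwich makes the two counting functions asymptotic up to a factor $(1-\epsilon)^{6g-6+2r}$ for every $\epsilon>0$, which is what actually proves Theorem \ref{sat1}. Your bi-Lipschitz comparison with a Liouville current is fine for establishing positivity of the extended function, but it cannot substitute for this last step, since bi-Lipschitz control does not by itself give existence of the limit for $\ell_X$ from existence of the limit for $\Vert\cdot\Vert_X$.
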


In addition to the existence of the limit in Theorem \ref{sat1}, we will end up knowing something about its value: it decomposes as the product of a constant which depends only on the curve $\gamma_0$ and another which depends only on the action. In particular we will get:

\begin{kor}\label{kor-rational}
Let $\Sigma$ be a compact surface with genus $g$ and $r$ boundary components and assume that $3g+r> 3$. For any curve $\eta$ in $\Sigma$ there is $n_{\eta}\in\BQ$ such that
$$\lim_{L\to\infty}\frac{\vert\{\gamma\in\CS_{\eta}\,\vert\,\ell_X(\gamma)\le L\}\vert}{\vert\{\gamma\in\CS_{\eta'}\,\vert\,\ell_X(\gamma)\le L\}\vert}=\frac{n_\eta}{n_{\eta'}}$$
for any two essential curves $\eta,\eta'$ in $\Sigma$ and any discrete and cocompact action $\pi_1(\Sigma)\actson X$.
\end{kor}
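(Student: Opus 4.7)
The plan is to extract a factorization of the limit from the proof of Theorem \ref{sat1}, and then establish rationality by comparison with the hyperbolic case. Following the currents-based approach announced in the abstract, the proof of Theorem \ref{sat1} should go through the weak limit
$$\mu_\eta := \lim_{L\to\infty}\frac{1}{L^{6g-6+2r}}\sum_{\gamma\in\CS_\eta}\delta_{\gamma/L}$$
of rescaled counting measures on the space of measured laminations $\CM\CL(\Sigma)$. Continuity and homogeneity of the extension of $\ell_X$ to currents then translate the counting problem into
$$\lim_{L\to\infty}\frac{1}{L^{6g-6+2r}}\,|\{\gamma\in\CS_\eta : \ell_X(\gamma)\le L\}| = \mu_\eta(B_X),$$
where $B_X := \{\mu : \ell_X(\mu)\le 1\}$. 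The measure $\mu_\eta$ is manifestly $\Map(\Sigma)$-invariant, and by Masur's theorem on the ergodicity of the $\Map(\Sigma)$-action on $\CP\CM\CL(\Sigma)$ with respect to Thurston measure $\mu_{\mathrm{Thu}}$, together with homogeneity under scaling, $\mu_\eta$ must be a positive scalar multiple $\mu_\eta = n_\eta\cdot\mu_{\mathrm{Thu}}$ in which $n_\eta$ depends only on the topological type of $\eta$.

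Substituting back yields the factorization
$$\lim_{L\to\infty}\frac{1}{L^{6g-6+2r}}\,|\{\gamma\in\CS_\eta : \ell_X(\gamma)\le L\}| = n_\eta \cdot \mu_{\mathrm{Thu}}(B_X),$$
a product of a factor depending only on the topological type and a factor depending only on the action. Taking the ratio of the two limits for $\eta$ and $\eta'$ cancels $\mu_{\mathrm{Thu}}(B_X)$ and produces $n_\eta/n_{\eta'}$, independently of $X$. This establishes the existence of positive constants $n_\eta$ satisfying the identity of the corollary; it remains to show they can be chosen rational.

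For rationality, I would specialize $X$ to a fixed hyperbolic structure on $\Sigma$. In that setting the constants $n_\eta$ have been computed, for simple curves by Mirzakhani and for arbitrary curve types in the authors' earlier work, in terms of integrals of Weil--Petersson volumes over moduli spaces of subsurfaces. Since Mirzakhani's volume polynomials have rational coefficients, the ratios $n_\eta/n_{\eta'}$ are rational. The main obstacle, as I see it, lies in the ergodicity step: for non-simple $\eta$ one must verify that $\mu_\eta$ is not supported on a proper $\Map(\Sigma)$-invariant subset of $\CM\CL(\Sigma)$, so that Masur's ergodicity forces proportionality to $\mu_{\mathrm{Thu}}$. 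This is precisely where the machinery developed for Theorem \ref{sat1} --- in particular the continuous extension of $\ell_X$ to currents --- does the real work.
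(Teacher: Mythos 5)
Your proposal follows essentially the same route as the paper: the limit in Theorem \ref{sat1} is shown there to equal $C_{\gamma_0}\cdot\mu_{\Thu}\left(\left\{\lambda\,\vert\,\Vert\lambda\Vert_X\le 1\right\}\right)$ with $C_{\gamma_0}=n_{\gamma_0}\cdot b_\Sigma^{-1}$ and $n_{\gamma_0}\in\BQ$ coming from Mirzakhani's computation in the hyperbolic case, so the action-dependent factor cancels in the ratio exactly as you describe. Two small corrections: the unit ball must be taken with respect to the stable length $\Vert\cdot\Vert_X$, not $\ell_X$ itself, since the paper shows $\ell_X$ need not extend continuously to currents (one then passes back to $\ell_X$ via Lemma \ref{lem-compare lengths}); and the ergodicity/support issue you flag as the main obstacle is precisely the content of the already-invoked \cite[Proposition 4.1]{VJ} together with Mirzakhani's theorem, which is how the paper pins down the full limit $\nu_L^{\gamma_0}\to C_{\gamma_0}\mu_{\Thu}$ (Theorem \ref{limit measures}) rather than merely subsequential limits.
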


Before sketching the proof of Theorem \ref{sat1} we present some applications. Endow the interior $\Sigma^0=\Sigma\setminus\D\Sigma$ of $\Sigma$ with some complete Riemannian metric $\rho$ and let $\ell_\rho(\gamma)$ be the $\rho$-length of a shortest curve in $\Sigma^0$ freely homotopic to $\gamma$. Note that $\ell_\rho(\gamma)$ agrees with the translation length of $\gamma$ with respect to the action by deck-transformations of $\pi_1(\Sigma)$ on the universal cover of $(\Sigma_0,\rho)$. We will show that Theorem \ref{sat1} implies:

\begin{kor}\label{kor-riemannian}
With $\Sigma$ as in Theorem \ref{sat1}, let $\rho$ be a complete Riemannian metric on $\Sigma^{0}=\Sigma\setminus\D\Sigma$. Then for every essential curve $\gamma_0$ the limit
$$\lim_{L\to\infty}\frac 1{L^{6g+2r-6}}\vert\{\gamma\in\CS_{\gamma_0}\,\vert\,\ell_\rho(\gamma)\le L\}\vert$$
exists and is positive.
\end{kor}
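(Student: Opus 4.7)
$\ell_\rho(\gamma)$ is the translation length of $\gamma$ for the isometric deck action of $\pi_1(\Sigma)$ on $(\tilde\Sigma^0,\tilde\rho)$; this action is free and properly discontinuous. When $\D\Sigma=\emptyset$ it is moreover cocompact, and Theorem \ref{sat1} applies at once. I will focus on the case $\D\Sigma\neq\emptyset$, where the deck action is no longer cocompact.

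My plan is to combine the main technical result of the paper---continuous homogeneous extension of the length function to currents for any torsion-free hyperbolic group action---with a comparison to a cocompact hyperbolic reference. Fix a hyperbolic metric $\sigma$ on $\Sigma$ with geodesic boundary, so that the corresponding deck action on the $\sigma$-convex core in $\BH^2$ is discrete and cocompact; Theorem \ref{sat1} then gives
\[
\vert\{\gamma\in\CS_{\gamma_0}:\ell_\sigma(\gamma)\leq L\}\vert = c_\sigma L^{6g-6+2r}(1+o(1)), \qquad c_\sigma>0,
\]
together with a continuous homogeneous extension $\bar\ell_\sigma:\CC(\Sigma)\to\BR_{\geq 0}$ (the Bonahon length) that is positive and proper on $\overline{\BR_{+}\cdot\CS_{\gamma_0}}$. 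Since $\pi_1(\Sigma)$ is a torsion-free hyperbolic group, the technical tool of the paper also produces a unique continuous homogeneous extension $\bar\ell_\rho:\CC(\Sigma)\to\BR_{\geq 0}$ of $\ell_\rho$.

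The counting argument underlying Theorem \ref{sat1} is driven entirely through this extension; the cocompactness hypothesis is ultimately used only to guarantee strict positivity and properness of the extension on $\overline{\BR_{+}\cdot\CS_{\gamma_0}}$. So the plan reduces to verifying these two properties for $\bar\ell_\rho$, and this is the main obstacle. My strategy is to obtain a two-sided comparison $\bar\ell_\rho\asymp\bar\ell_\sigma$ on this closure. Choose a compact subsurface $K\subset\Sigma^0$ that is a deformation retract of $\Sigma^0$ and contains the $\sigma$-geodesic representatives of the essential curves (perturbed slightly off $\D\Sigma$). On $K$ the two Riemannian tensors are pointwise bilipschitz, so minimising over representatives inside $K$ yields $\ell_\rho\leq C\,\ell_\sigma$ on all of $\CS$. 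The reverse inequality is the delicate point: one must rule out essential $\rho$-geodesics running arbitrarily deep into the ends of $(\Sigma^0,\rho)$, and this is where the non-peripheral hypothesis enters, since every representative of an essential class must exit each end, so a compactness argument in the ends should provide a uniform lower bound on the $\rho$-cost of such excursions. Extending the resulting bilipschitz comparison by continuity and homogeneity from $\CS_{\gamma_0}$ to $\overline{\BR_+\cdot\CS_{\gamma_0}}$ then gives the required positivity and properness of $\bar\ell_\rho$ and completes the plan.
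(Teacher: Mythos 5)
There is a genuine gap, and it sits exactly at the point you flag as ``the main obstacle.'' Your plan relies on the assertion that ``the technical tool of the paper also produces a unique continuous homogeneous extension $\bar\ell_\rho$'' for the deck action on the universal cover of $(\Sigma^0,\rho)$. But Theorem \ref{sat-length} requires the action to be \emph{cocompact}: its proof is built on the Milnor--Schwarz $\Gamma$-equivariant quasi-isometry $\Phi:\CU\to X$ from the geodesic flow space to $X$, and on $X$ being hyperbolic, neither of which is available for a non-cocompact action on an arbitrary complete Riemannian universal cover. So the extension whose positivity and properness you propose to verify has not been shown to exist; this is the whole difficulty, not a given. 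Moreover, even granting the extension, your endgame does not deliver the conclusion: a two-sided comparison $c_1\ell_\sigma\le\ell_\rho\le c_2\ell_\sigma$ only bounds the counting function above and below by constants times $L^{6g-6+2r}$; it does not give existence of the limit, which requires running the measure-convergence argument ($\nu_L^{\gamma_0}\to C_{\gamma_0}\mu_{\Thu}$ tested against the unit ball of a \emph{continuous} homogeneous length function).

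Your heuristic for the reverse inequality is also off the mark: there is in general no ``uniform lower bound on the $\rho$-cost of excursions'' into the ends (think of cusp-like ends, where deep arcs can be cheap), and essential curves need not exit the ends at all. The mechanism that actually controls the geodesics is topological: by Freedman--Hass--Scott, a $\rho$-minimal representative realizes the minimal self-intersection number, which is the same finite number $I$ for every curve in the mapping class group orbit $\CS_{\gamma_0}$; hence a minimal geodesic can wrap at most $I$ times around any end-cylinder, its arcs in the ends have length at most $(I+2)\ell(\beta_i)$, and therefore all these geodesics lie in a fixed compact subsurface $\Sigma_K$. Once you have that, you should not compare metrics at all: the deck action on the universal cover of $(\Sigma_K,\rho)$ is cocompact, $\ell_\rho=\ell_{X'}$ on $\CS_{\gamma_0}$, and Theorem \ref{sat1} applies verbatim. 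That reduction replaces both the unconstructed extension $\bar\ell_\rho$ and the insufficient bilipschitz comparison.
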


Corollary \ref{kor-riemannian} is due for hyperbolic metrics to Mirzakhani \cite{Maryam2}, generalizing her work on the growth of simple closed geodesics \cite{Maryam1}. For non-positively curved metrics, it is due to Erlandsson-Souto \cite{VJ}. In fact, the results of these papers are instrumental in the proof of Theorem \ref{sat1}. A natural example for which Corollary \ref{kor-riemannian} was previously unknown, even for simple curves, is when $\rho$ is given by the induced metric on a smooth embedding of $\Sigma$ in $\BR^3$. 

In a different direction, we can also consider the action of $\pi_1(\Sigma)$ on the Cayley graph $C(\pi_1(\Sigma),S)$ of $\pi_1(\Sigma)$ with respect to some finite symmetric set of generators $S\subset\pi_1(\Sigma)$. In this case the translation length of $\gamma$ with respect to the action $\pi_1(\Sigma)\actson C(\pi_1(\Sigma),S)$ is the minimal word length with respect to $S$ over all elements in $\pi_1(S)$ representing the conjugacy class $\gamma$. Since the action $\pi_1(\Sigma)\actson C(\pi_1(\Sigma),S)$ is discrete and cocompact, we get the following directly from Theorem \ref{sat1}:

\begin{kor}\label{kor-wordlength}
With $\Sigma$ as in Theorem \ref{sat1}, let $S\subset\pi_1(\Sigma)$ be a finite symmetric set of generators. Then, for every essential curve $\gamma_0$, the limit
$$\lim_{L\to\infty}\frac 1{L^{6g+2r-6}}\vert\{\gamma\in\CS_{\gamma_0}\,\vert\,\ell_S(\gamma)\le L\}\vert$$
exists and is positive. Here $\ell_S(\gamma)$ is the minimal word length with respect to $S$ over all elements in $\pi_1(\Sigma)$ representing the conjugacy class $\gamma$.\qed
\end{kor}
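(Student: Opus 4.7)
The plan is to verify that the isometric action of $\pi_1(\Sigma)$ on its Cayley graph $X = C(\pi_1(\Sigma),S)$ fits the hypotheses of Theorem \ref{sat1}, and then to identify the two notions of length that appear. First I would equip $X$ with the path metric in which each edge has length one, which makes $X$ a geodesic metric space, and note that $\pi_1(\Sigma)$ acts on $X$ by left multiplication, which clearly gives an isometric action.

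Next I would check discreteness and cocompactness. The stabilizer of any point of $X$ is either trivial (at vertices) or contained in the subgroup of order at most two generated by the torsion generators in $S$; since $\pi_1(\Sigma)$ is torsion-free, all point-stabilizers are trivial, so the action is free and in particular discrete. The quotient $\pi_1(\Sigma)\backslash X$ is a finite graph (essentially a wedge of $|S|/2$ circles), so the action is cocompact. Thus Theorem \ref{sat1} applies directly and yields the existence and positivity of the analogous limit for the translation length $\ell_X$.

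The remaining step, and the only place that requires a moment's thought, is to compare $\ell_X(\gamma)$ with the combinatorial word length $\ell_S(\gamma)$ from the statement. Restricting the infimum defining $\ell_X(\gamma)$ to vertices gives exactly
$$\inf_{g\in\pi_1(\Sigma)} d_X(g,\gamma g) = \min_{g} |g^{-1}\gamma g|_S = \ell_S(\gamma).$$
The infimum over all of $X$ can differ from this only by a bounded constant: every point of $X$ lies within distance $\tfrac12$ of a vertex, and combining this with the triangle inequality and the isometric action of $\gamma$ yields
$$\ell_X(\gamma)\le \ell_S(\gamma)\le \ell_X(\gamma)+1.$$
This sandwiches $|\{\gamma\in\CS_{\gamma_0} : \ell_S(\gamma)\le L\}|$ between two counts of the form appearing in Theorem \ref{sat1}, differing only by a unit shift in $L$. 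Dividing by $L^{6g-6+2r}$ and letting $L\to\infty$, the two outer counts have the same positive limit by Theorem \ref{sat1}, and hence so does the middle one. I expect no serious obstacle here; the only subtlety is the additive defect between continuous and vertex-based translation lengths, and that defect is washed out in the polynomial asymptotics of order $L^{6g-6+2r}$.
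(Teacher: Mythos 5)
Your proof is correct and follows essentially the same route as the paper, which deduces the corollary directly from Theorem \ref{sat1} applied to the discrete, cocompact action of $\pi_1(\Sigma)$ on the Cayley graph $C(\pi_1(\Sigma),S)$. Your extra care in comparing the infimum of $d_X(x,\gamma x)$ over all of $X$ with the minimum over vertices (and the resulting sandwich argument) is a harmless refinement of the paper's direct identification of $\ell_X$ with $\ell_S$, and both arguments are valid.
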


For so-called {\em simple} generating sets Corollary \ref{kor-wordlength} is due to the first author of this note \cite{Viveka}. 

\begin{bem}
The combined statements of Corollary \ref{kor-rational} and Corollary \ref{kor-wordlength} can be thought of as giving an answer to the generalization of Conjecture 1 in \cite{Moira} to all surfaces of negative Euler-characteristic and to arbitrary generating sets. It would be interesting to investigate if our methods can be of any use to study the other problems suggested in that paper.
\end{bem}

We discuss now the idea of the proof of Theorem \ref{sat1}. Unsurprisingly, its proof has common components with the proof of Corollary \ref{kor-riemannian} for non-positively curved metrics \cite{VJ} and that of Corollary \ref{kor-wordlength} for simple generating sets \cite{Viveka}. More concretely, we will use that certain limits of measures on the space of currents $\CC(\pi_1(\Sigma))$ exist. However, there is a key ingredient in the proofs of the two mentioned results which is lacking in our setting, namely that there are currents $\lambda_\rho,\lambda_S\in\CC(\pi_1(\Sigma))$ with $\ell_\rho(\gamma)=\iota(\lambda_\rho,\gamma)$ and $\ell_S(\gamma)=\iota(\lambda_S,\gamma)$ for all $\gamma$. Analyzing how the existence of these currents was used in \cite{VJ} and \cite{Viveka}, one sees that they essentially serve to prove that the corresponding length function, say $\ell_S(\cdot)$, admits a continuous and homogenous extension to the space of currents: set $\ell_S(\mu)=\iota(\lambda_S,\mu)$. Lacking such a current, we need to construct the extension in some other way. 

However, it is not always possible to extend the length function to a continuous homogenous function on the space of currents. As an example, consider the free group on two generators $a$ and $b$, and the word length $\ell_S(\cdot)$ with respect to the generating set $S=\{a,b,a^5\}$ and their inverses. Note that, considered as currents, the sequence $\left(\frac{1}{5n}a^{5n}b\right)$ converges to the current $a$ as $n\to\infty$. Hence, if such a continuous extension existed, we would have $\ell_S(\frac{1}{5n}a^{5n}b)\to\ell_S(a)=1$ as $n\to\infty$. On the other hand, using the homogeneity of the extension, 
$$\ell_S\left(\frac{1}{5n}a^{5n}b\right) = \frac{1}{5n}\ell_S\left(a^{5n}b\right) = \frac{n+1}{5n} \to\frac{1}{5}$$
as $n\to\infty$, a contradiction. Hence no such extension exists in general. 

We by-pass this problem by considering {\em stable lengths} instead. With the same notation, we define the stable length to be 
$$\Vert \gamma\Vert_X=\lim_{n\to\infty}\frac 1n\inf_{x\in X}d_X(x,\gamma^n(x))$$
for some (any) element $\gamma\in\pi_1(\Sigma)$ in the conjugacy class $\gamma$. The following result, which we prove for general hyperbolic groups in the hope that somebody will find use for it, shows that the stable length extends to the space of currents:

\begin{sat}\label{sat-length}
Let $\Gamma\actson X$ be a discrete and cocompact isometric action of a torsion free Gromov hyperbolic group  on a geodesic metric space and let $\CC^+(\Gamma)$ be the space of oriented currents of $\Gamma$. There is a unique continuous function
$$\Vert\cdot\Vert_X:\CC^+(\Gamma)\to\BR_+$$
with $\Vert\gamma\Vert_X=\lim_{n\to\infty}\frac 1n\inf_{x\in X}d_X(x,\gamma^n(x))$ for every non-trivial $\gamma\in\Gamma$. Moreover, the function $\Vert\cdot\Vert_X$ is flip-invariant.
\end{sat}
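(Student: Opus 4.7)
Uniqueness of the extension is immediate: rational currents---positive rational linear combinations of the atomic currents associated to primitive conjugacy classes---are dense in $\CC^+(\Gamma)$, so the prescribed values of $\Vert\cdot\Vert_X$ on conjugacy classes determine any continuous extension. Flip-invariance, once existence is proved, also reduces by density to the identity $\Vert\gamma\Vert_X=\Vert\gamma^{-1}\Vert_X$, which is clear from the definition.

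For existence the plan is to realize the stable length as the integral of a continuous ``speed'' function along closed orbits of a flow, and then to extend that integral to all currents. I would work with a Gromov--Mineyev-style topological flow space $\CF_\Gamma$ for $\Gamma$: a locally compact metrizable space carrying commuting free continuous actions of $\Gamma$ (proper and cocompact) and of $\BR$ (a flow $\phi_t$), whose $\BR$-orbit space is canonically $\D^2\Gamma$ and whose closed $\phi$-orbits in $\Gamma\backslash\CF_\Gamma$ are in bijection with primitive conjugacy classes in $\Gamma$. Under this correspondence every oriented current $\mu\in\CC^+(\Gamma)$ lifts to a $\Gamma$- and $\phi$-invariant Radon measure $\tilde\mu$ on $\CF_\Gamma$ (locally of the form $\mu\otimes dt$), and a rational current corresponds to Lebesgue mass supported on its associated closed orbit.

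The action $\Gamma\actson X$ is encoded via a continuous $\Gamma$-equivariant map $\Phi:\CF_\Gamma\to X$ whose restriction to each flow line is an equivariant quasi-geodesic with endpoints prescribed by the identification $\D\Gamma\cong\D X$. I would then define
$$v(x)\;=\;\lim_{T\to\infty}\frac{1}{T}\,d_X\bigl(\Phi(x),\Phi(\phi_T x)\bigr),$$
and, using $\delta$-hyperbolicity, the Morse lemma, and Fekete's subadditive lemma, prove that the limit exists and defines a positive bounded $(\Gamma\times\phi)$-invariant Borel function which is moreover invariant under flow-reversal (because $d_X$ is symmetric). For a primitive class $\gamma$ with closed orbit $\tau_\gamma$ of intrinsic period $p_\gamma$, $\Gamma$-equivariance of $\Phi$ forces $v\equiv\Vert\gamma\Vert_X/p_\gamma$ on $\tau_\gamma$, so that
$$\int_{\tau_\gamma}v\,dt\;=\;\Vert\gamma\Vert_X.$$
The desired extension is then
$$\Vert\mu\Vert_X\;:=\;\int_{\Gamma\backslash\CF_\Gamma} v\,d\tilde\mu,$$
which agrees with the stable length on rational currents by the computation above, is manifestly flip-invariant via flip-invariance of $v$, and is weak-$*$ continuous on $\CC^+(\Gamma)$ provided $v$ is a continuous bounded function on the compact quotient $\Gamma\backslash\CF_\Gamma$.

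The central difficulty is verifying that $v$ descends to a genuinely \emph{continuous} function on $\Gamma\backslash\CF_\Gamma$. In a Riemannian setting $v$ would simply be an infinitesimal speed, but here $X$ is only a geodesic metric space, so continuity has to be extracted coarsely from the uniform fellow-traveller property of quasi-geodesics in a $\delta$-hyperbolic space, together with cocompactness of $\Gamma\actson X$, which provides uniform Lipschitz control on $\Phi$ and uniform constants in the defining limit. Once this is achieved the remaining verifications---weak-$*$ continuity of the pairing and matching of the rational case---are routine.
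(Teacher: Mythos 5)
Your uniqueness and flip-invariance arguments are fine and coincide with the paper's. The existence strategy, however, rests on a step that is false: the pointwise asymptotic speed $v(x)=\lim_{T\to\infty}\frac1T d_X(\Phi(x),\Phi(\phi_Tx))$ is in general \emph{not} continuous on $\Gamma\backslash\CF_\Gamma$, only Borel. Take the paper's own example $\Gamma=F(a,b)$ acting on its Cayley graph $X$ for $S=\{a^{\pm1},b^{\pm1},a^{\pm5}\}$, and let $\gamma_k=a^kb^{k^2}$. The point $x_k$ of the periodic flow line of $\gamma_k$ based at the identity converges to the point $x_\infty$ of the axis of $a$ based at the identity (the endpoints $(\gamma_k^-,\gamma_k^+)$ converge to $(a^{-\infty},a^{+\infty})$ in $\D^2\Gamma$). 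Yet $v(x_k)=\Vert\gamma_k\Vert_S/p_k\approx(k/5+k^2)/(k+k^2)\to 1$, while $v(x_\infty)=\Vert a\Vert_S/p_a=1/5$. So $v$ jumps, and the claim that $\int v\,d\tilde\mu$ is weak-$*$ continuous ``provided $v$ is continuous'' has a hypothesis that cannot be met. (Note this does not contradict continuity of the stable length itself: the rescaled currents $\frac1{p_k}\gamma_k$ converge to the current of $b$, not of $a$, and the functional is continuous even though the integrand is not. The discontinuity of $v$ lives on flow lines, which are null for the relevant limit measures--but your argument makes no use of that.)

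The paper circumvents exactly this issue by never forming the pointwise limit. It fixes a transversal $\tau$ for the flow and sets $L_k(\mu,\tau,\Phi)=\int_\tau d_X(\Phi(v),\Phi(T_k(v)))\,d\mu^\tau(v)$, where $T_k$ is the $k$-th return map; each $L_k$ is (for a transversal adapted to the target current $\mu_0$, with $\mu_0^\tau(\D\tau)=0$ and the discontinuity set of $T_k$ of measure zero) continuous in $\mu$ at $\mu_0$. Subadditivity in $k$ gives existence of $\CL(\mu,\tau)=\lim_k L_k/k$, and the key extra input--absent from your sketch--is that this Fekete limit is \emph{uniform} in $\mu$ (Lemma \ref{lem-uniform}, proved via the quasi-geodesic concatenation estimate of Lemma \ref{lem-almost right}). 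A uniform limit of functions continuous at $\mu_0$ is continuous at $\mu_0$, and matching with $\Vert\gamma\Vert_X$ on closed orbits plus density of $\BR_+\CS^+(\Gamma)$ finishes the proof. To repair your argument you would need to replace ``$v$ is continuous'' by this interchange of limit and integration with uniform control; as written, the central step fails.
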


\begin{bem}
It is perhaps worth noticing that the stable length function $\Vert\cdot\Vert_X$ is linear, meaning that
$$\Vert t\cdot\lambda+s\cdot\mu\Vert_X=t\cdot\Vert\lambda\Vert_X+s\cdot\Vert\mu\Vert_X$$
for all $\lambda,\mu\in\CC^+(\Gamma)$ and $s,t\in\BR_+$. Recall that $\CC^+(\Gamma)$, being a space of measures, is a cone in a linear space. 
\end{bem}

Uniqueness of the function $\Vert\cdot\Vert_X$ follows from the density of the currents of the form $t\cdot\gamma$ with $t\in\BR_+$ and with $\gamma\in\Gamma$ of infinite order \cite{Bonahon-currents-group}. Note that Theorem \ref{sat-length} was proved in \cite{Bonahon-currents-group} for spaces $X$ with the property that any two points in the boundary at infinity determine a unique geodesic between them---this condition is, in general, neither satisfied for lifts of Riemannian metrics, nor for Cayley graphs. 

In any event, armed with Theorem \ref{sat-length}, we can follow the strategy from \cite{VJ} and \cite{Viveka} to prove a version of Theorem \ref{sat1} replacing length by stable length:

\begin{sat}\label{sat2}
Let $\Sigma$ be a compact surface with genus $g$ and $r$ boundary components and assume that $3g+r > 3$. Let also $\pi_1(\Sigma)\actson X$ be a discrete and cocompact isometric action on a geodesic metric space $X$, $\gamma_0$ an essential curve in $\Sigma$, and $\CS_{\gamma_0}=\Map(\Sigma)\cdot\gamma_0$ the set of all curves of type $\gamma_0$. Then the limit
$$\lim_{L\to\infty}\frac 1{L^{6g-6+2r}}\vert\{\gamma\in\CS_{\gamma_0}\text{ with }\Vert\gamma\Vert_X\le L\}\vert$$
exists and is positive. 
\end{sat}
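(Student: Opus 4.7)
The plan is to reformulate the counting problem as the evaluation of a weakly convergent sequence of measures on the space of currents at a single test set. By the homogeneity part of Theorem~\ref{sat-length}, the condition $\Vert\gamma\Vert_X\le L$ is equivalent to $\Vert\gamma/L\Vert_X\le 1$, so that
$$\frac 1{L^{6g-6+2r}}\vert\{\gamma\in\CS_{\gamma_0}\,\vert\,\Vert\gamma\Vert_X\le L\}\vert=\nu_L^{\gamma_0}(K),$$
where $K=\{\mu\in\CC(\pi_1(\Sigma))\,\vert\,\Vert\mu\Vert_X\le 1\}$ and
$$\nu_L^{\gamma_0}=\frac 1{L^{6g-6+2r}}\sum_{\gamma\in\CS_{\gamma_0}}\delta_{\gamma/L}.$$
The work of Mirzakhani~\cite{Maryam2}, extended in~\cite{VJ,Viveka}, is the key input that, as $L\to\infty$, the $\nu_L^{\gamma_0}$ converge weakly to $n_{\gamma_0}\cdot\mu_{\Thu}$, where $\mu_{\Thu}$ is the Thurston measure on $\CM\CL(\Sigma)\subset\CC(\pi_1(\Sigma))$ and $n_{\gamma_0}>0$ depends only on the type of $\gamma_0$. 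Theorem~\ref{sat2} then reduces to extracting $\nu_L^{\gamma_0}(K)\to n_{\gamma_0}\cdot\mu_{\Thu}(K)\in(0,\infty)$ from this weak convergence.

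Two standard checks are needed. First, compactness of $K$: the Milnor--Schwarz lemma applied to the discrete cocompact action $\pi_1(\Sigma)\actson X$ yields, for an auxiliary hyperbolic metric $\rho$ on $\Sigma^0$, a constant $c>0$ with $\ell_\rho(\gamma)\le c\,\Vert\gamma\Vert_X$ for every essential curve $\gamma$. Both sides extend continuously and homogeneously to currents---the left by Bonahon, the right by Theorem~\ref{sat-length}---and positive multiples of curves are dense in $\CC(\pi_1(\Sigma))$, so the inequality persists on all of $\CC(\pi_1(\Sigma))$. Hence $K$ is contained in the compact closed $\ell_\rho$-ball of radius $c$. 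Second, by homogeneity and continuity $\D K\subseteq\{\mu\,\vert\,\Vert\mu\Vert_X=1\}$; in the piecewise-linear coordinates on $\CM\CL(\Sigma)$ the Thurston measure is Lebesgue-class, and the level set of the nontrivial, continuous, positively homogeneous function $\Vert\cdot\Vert_X$ is codimension one and thus Lebesgue-null. Positivity of $\Vert\cdot\Vert_X$ on $\CM\CL(\Sigma)\setminus\{0\}$---needed to see the level set as a bona fide codimension-one ``sphere''---follows from the comparison $\Vert\cdot\Vert_X\ge c^{-1}\ell_\rho$ just established.

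Positivity of the limit is then immediate: for any simple closed curve $\eta$ and any sufficiently small $t>0$ the current $t\eta$ lies in the interior of $K$, so $K^{\circ}\cap\CM\CL(\Sigma)$ is a non-empty open subset of $\CM\CL(\Sigma)$ and therefore has positive $\mu_{\Thu}$-measure. The main obstacle, and the reason the argument could not be carried through in the framework of~\cite{VJ,Viveka}, is precisely the promotion of the Milnor--Schwarz comparison from curves to all currents: this requires the existence of a continuous, homogeneous extension of $\Vert\cdot\Vert_X$ to $\CC(\pi_1(\Sigma))$, which is supplied by Theorem~\ref{sat-length}. Once that ingredient is in place, Theorem~\ref{sat2} is a clean application of the portmanteau theorem.
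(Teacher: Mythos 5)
Your proposal is correct and follows essentially the same route as the paper: homogeneity of the extended stable length from Theorem~\ref{sat-length} turns the count into $\nu_L^{\gamma_0}(K)$ for $K=\{\mu\,\vert\,\Vert\mu\Vert_X\le 1\}$, and the weak convergence $\nu_L^{\gamma_0}\to C_{\gamma_0}\mu_{\Thu}$ together with $\mu_{\Thu}(\D K)=0$ gives the limit. You additionally spell out two points the paper leaves implicit---compactness of $K$ via the Milnor--Schwarz comparison with a hyperbolic length function, and positivity of $\mu_{\Thu}(K)$---which are worthwhile checks rather than a different argument.
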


We will then deduce Theorem \ref{sat1} from Theorem \ref{sat2}. In fact, we will get that the limits in both theorems agree.
\medskip

The paper is organized as follows. In section \ref{sec-hyperbolic} we recall general facts about hyperbolic spaces and hyperbolic groups. Then, in section \ref{sec-currents}, we remind the reader of what the geodesic flow of a hyperbolic group is and recall facts about currents. We suggest that experts, other than the referee, just breeze through these two sections because, other than serving to fix the terminology and notation, they only contain results there are either well-known or almost immediate. In section \ref{sec-meat} we prove Theorem \ref{sat-length} and finally in section \ref{sec-counting} we prove Theorem \ref{sat1}, Theorem \ref{sat2} and the corollaries.
\medskip

\noindent {\bf Acknowledgements.} The authors would like to thank Chris Leininger for some very interesting conversations on this and other topics. 

\section{}\label{sec-hyperbolic}
We recall a few facts about hyperbolic spaces and groups. See \cite{Gromov,GdlH} and \cite{BH} for definitions and details.

\subsection{Metric spaces and quasi-isometries}
In this paper, we assume all metric spaces $(X,d_X)$ to be {\em geodesic}, meaning that the distance $d_X$ is inner and that any two points $x,x'\in X$ are joined by a segment $[x,x']$ whose length agrees with the distance $d_X(x,x')$ between the points. Given two such metric spaces $X=(X,d_X)$ and $Y=(Y,d_Y)$, recall that a map 
$$\Phi:X\to Y$$
is an {\em $(L,A)$-quasi-isometric embedding} for some $L\ge 1$ and $A\ge 0$ if we have
$$\frac 1L\cdot d_X(x,x')-A\le d_Y(\Phi(x),\Phi(x'))\le L\cdot d_X(x,x')+A$$
for all $x,x'\in X$. A map is a quasi-isometric embedding if it is an $(L,A)$-quasi-isometric embedding for some choice of $L$ and $A$. Two quasi-isometric embeddings $\Phi,\Psi:X\to Y$ are {\em equivalent} if their images are at bounded distance, meaning that there is some $C$ with $d_Y(\Phi(x),\Psi(x))\le C$ for all $x\in X$. A quasi-isometric embedding $\Phi:X\to Y$ is a {\em quasi-isometry} if there is a further quasi-isometric embedding $\Psi:Y\to X$ such that $\Psi\circ\Phi$ and $\Phi\circ\Psi$ are equivalent to the respective identity maps $\Id_X$ and $\Id_Y$.

In our setting, quasi-isometries will appear via the well-known Milnor-Schwarz theorem:

\begin{named}{Milnor-Schwarz theorem}
Let $X,Y$ be geodesic metric spaces and $\Gamma$ a group. Let also $\Gamma\actson X$ and $\Gamma\actson Y$ be discrete and cocompact isometric actions. Then there is a $\Gamma$-equivariant quasi-isometry 
$$\Phi:X\to Y.$$
Moreover, any two such quasi-isometries are equivalent to each other.
\end{named}

As an example, let $\Gamma$ be a finitely generated group, let $S$ and $S'$ be finite symmetric generating sets, and let $C(\Gamma,S)$ and $C(\Gamma,S')$ be the associated Cayley graphs endowed with the inner metric with respect to which each edge has unit length. Then $\Gamma$ acts by isometries, discretely and cocompactly on both $C(\Gamma,S)$ and $C(\Gamma,S')$. In particular, it follows from the Milnor-Schwarz theorem that $C(\Gamma,S)$ and $C(\Gamma,S')$ are quasi-isometric.

\subsection{Hyperbolic spaces and groups}
A {\em triangle} in a metric space consists just of three points $x,y,z\in X$ and three geodesics $[x,y],[y,z]$ and $[z,x]$, one between any two pair of points. A metric space $X$ is {\em $\delta$-hyperbolic} for some $\delta\ge 0$ if we have
$$\max_{t\in[z,x]}\min_{s\in[x,y]\cup[y,z]}d_X(t,s)\le\delta$$
for every triangle with vertices $x,y,z\in X$ and with sides $[x,y],[y,z]$ and $[z,x]$. A metric space is {\em hyperbolic} if it is $\delta$-hyperbolic for some $\delta$. 

From a geometric point of view, the for us most important feature of hyperbolic spaces will be the {\em stability of quasi-geodesics}. Recall that an {\em $(L,A)$-quasi-geodesic} is nothing other than an $(L,A)$-quasi-geodesic embedding
$$\Phi:I\to X$$
of some subinterval of $\BR$. Note that $I$ can be open, closed, half-open, possibly a point, and  possibly the whole line.

\begin{named}{Stability of quasi-geodesics}
For all $L\ge 1$, $A\ge 0$ and $\delta\ge 0$ there is $D$ such that for any hyperbolic space $X$, for any finite interval $[a,b]\subset\BR$ and for any two $(L,A)$-quasi-geodesics
$$\Phi,\Psi:[a,b]\to X$$
with $\Phi(a)=\Psi(a)$ and $\Phi(b)=\Psi(b)$ we have
$$\sup_{s\in[a,b]}\inf_{t\in[a,b]}d_X(\Phi(s),\Psi(t))\le D.
$$
\end{named}

\begin{figure}[h]
{
\leavevmode \SetLabels
\L(.46*.57) $\Phi$\\%
\L(.38*.28) $\Psi$\\
\endSetLabels
\begin{center}
\AffixLabels{\centerline{\epsfig{file =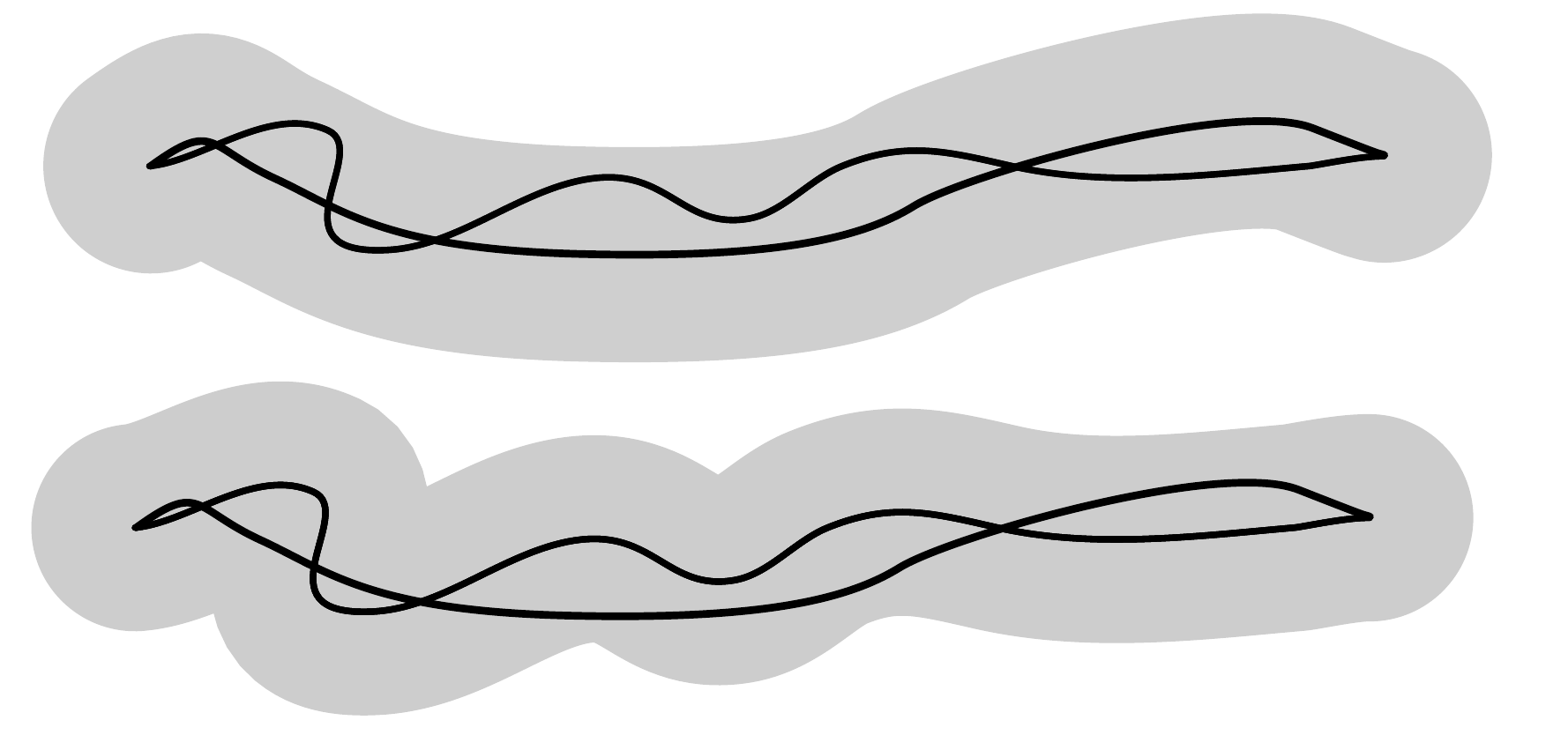,width=8.5cm,angle=0}}}
\vspace{-24pt}
\end{center}
}
\caption{Stability of quasi-geodesics: Any two $(L,A)$-quasi-geodesics are in the $D$-neighborhoods of each other.} \label{fig:stab}
\end{figure}

A more humane version of this last statement, illustrated in Figure \ref{fig:stab}, is the assertion that {\em quasi-geodesics in hyperbolic metric spaces fellow-travel}, but we wanted to stress that the constants only depend on the hyperbolicity and quasi-geodesic constants.

In general, determining if a given map is a quasi-geodesic might be very hard, but in the setting of hyperbolic metric spaces it is, once correctly formulated, a local problem. In fact, we have the following well-known fact:

\begin{named}{Local geodesics are quasi-geodesics}
For every $\delta>0$ there exist $D>0, L\geq 1, A\geq 0$ such that if $X$ is any $\delta$-hyperbolic space, $[a,b]\subset\mathbb{R}$, and $\Phi:[a,b]\to X$ is a $D$-local geodesic, then $\Phi$ is an $(L,A)$-quasi-geodesic. 
\end{named}

Here, a map $\Phi:[a,b]\to X$ is a {\em $D$-local geodesic} if its restriction $\Phi\vert_{[t_1,t_2]}$ to any segment $[t_1,t_2]\subset[a,b]$ of length $\vert t_1-t_2\vert<D$ is actually geodesic. In fact, the above also holds true for local quasi-geodesics, but it involves a few more quantifiers and here we will only need the stated version. 

Another useful observation is that for all $\delta$ there is some constant $D$ such that if $X$ is $\delta$-hyperbolic, then the space obtained by coning off all balls of radius $D$ is contractible. This is the key argument to ensure the contractibility of the so-called Rips complex. Anyways, if $\Gamma$ acts on $X$ by isometries, it also acts on this coned-off space by isometries. In fact, the metric on the cones can be chosen so that $X$ is totally convex in the coned-off space. Altogether we get the following fact, well-known to experts:

\begin{lem}\label{lem-cone off}
Every $\delta$-hyperbolic space $X$ is a totally convex subset of a contractible metric space $\hat X$ on which $\Isom(X)$ acts. Moreover, the inclusion $X\hookrightarrow\hat X$ is an $\Isom(X)$-equivariant quasi-isometry.\qed
\end{lem}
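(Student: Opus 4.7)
The plan is a standard coning-off construction. Fix $D=D(\delta)$ large enough for the classical contractibility theorem of Rips to apply, i.e., such that the simplicial complex $P_D(X)$ with simplices the finite subsets of $X$ of diameter at most $D$ is contractible; see e.g.\ Chapter III.$\Gamma$ of \cite{BH}. Fix in addition a height parameter $h\ge D$.

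Build $\hat X$ by attaching to $X$ one ``tree-like cone'' $C_x$ for each $x\in X$: $C_x$ consists of a new apex $c_x$ joined by a segment of length $h$ to every $y\in\overline{B(x,D)}$, with each endpoint $y$ identified with the corresponding point of $X$. Give $\hat X$ the resulting length metric. An isometry $\phi\in\Isom(X)$ permutes the closed balls of radius $D$ and extends to $\hat X$ via $\phi\cdot c_x=c_{\phi(x)}$, so $\Isom(X)$ acts on $\hat X$ by isometries.

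Total convexity is then straightforward: a path $\gamma$ from $y_1,y_2\in X$ that enters $k$ cones must traverse each visited apex, contributing at least $2h$ to its length per cone, while by the triangle inequality on $X$ it saves at most $2D$ of base distance at each visit; the choice $h\ge D$ therefore gives $\ell(\gamma)\ge d_X(y_1,y_2)$, so $d_{\hat X}|_{X\times X}=d_X$. Contractibility of $\hat X$ follows by the usual Rips-style filling argument: every finite subset $S\subset X$ of diameter at most $D$ lies in the contractible cone $C_y$ for any $y\in S$, so a sufficiently fine subdivision of any map $S^n\to\hat X$ consists of pieces each null-homotopic in some cone, giving null-homotopy of the whole sphere. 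Finally, the inclusion $X\hookrightarrow\hat X$ is $\Isom(X)$-equivariant and isometric on $X$ (by total convexity), and every point of $\hat X$ lies within $h$ of $X$, so it is a quasi-isometry.

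The main obstacle is balancing total convexity with contractibility. A genuine Euclidean cone metric on the $C_x$ would admit short straight-line chords across cones, violating the exact equality $d_{\hat X}|_{X\times X}=d_X$; the tree-like construction above forces any traversal of a cone interior to pass through its apex and so to contribute the full $2h$ cost. In exchange, contractibility of $\hat X$ is no longer automatic from a simplicial model and must be deduced directly from the Rips filling argument, but this is standard given the choice of $D$.
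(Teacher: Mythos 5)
Your construction is the one the paper itself has in mind (the paper only sketches it in the paragraph preceding the lemma): cone off all balls of radius $D=D(\delta)$, metrize the cones so that $X$ sits convexly inside, extend the $\Isom(X)$-action by permuting cones, and get the quasi-isometry from the fact that every point of $\hat X$ is within bounded distance of $X$. Your length computation correctly gives $d_{\hat X}|_{X\times X}=d_X$, which is in fact all the paper uses later. One small repair: with $h\ge D$ you only get this isometric embedding, not \emph{total} convexity. If $h=D$ and $y,z\in\overline{B(x,D)}$ happen to satisfy $d_X(y,z)=2D$, then the path $y\to c_x\to z$ has length $2h=2D=d_X(y,z)$ and is a geodesic of $\hat X$ that leaves $X$. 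Taking $h>D$ strictly makes every excursion into a cone strictly longer than the direct route, so geodesics between points of $X$ stay in $X$.

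The genuine gap is in the contractibility step. The claim that a fine subdivision of a map $S^n\to\hat X$ whose pieces are each null-homotopic in some cone ``gives null-homotopy of the whole sphere'' is not a valid inference: a space covered by contractible pieces need not be contractible, and local null-homotopies do not assemble into a global one without further input (this is exactly what the nerve of the cover obstructs). The correct Rips-style argument is a contraction toward a basepoint $x_0$: after pushing the sphere off the open cones into $X$, replace each vertex image $f(v)$ of a fine triangulation by a point moved a definite distance toward $x_0$ along a geodesic; $\delta$-hyperbolicity, with $D$ at least $4\delta$ plus a constant, guarantees that the new images of the vertices of any simplex still have diameter at most $D$, hence lie in a single coned-off ball, over whose cone the homotopy extends simplex by simplex. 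Iterating moves the whole sphere into one cone, where it dies. This is the argument of \cite{BH}, III.$\Gamma$, and it is what must be invoked (or reproduced) in place of the local-to-global claim; with that substitution and $h>D$, your proof is complete and coincides with the paper's intended one.
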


Finally, returning to the stability of quasi-geodesics, one of its key consequences is that any space quasi-isometric to a hyperbolic space is itself hyperbolic. This applies to the space $\hat X$ in Lemma \ref{lem-cone off}, but it also implies that the hyperbolicity or not of the Cayley graph $C(\Gamma,S)$ of a group $\Gamma$, with respect to a finite generating set $S$, does not depend on the latter. Accordingly, if $C(\Gamma,S)$ is hyperbolic for some $S$, then we say that $\Gamma$ itself is a {\em hyperbolic group}.

\subsection{Boundary at infinity}
Two geodesic or quasi-geodesic rays in a hyperbolic space $X$ are {\em asymptotic} if they fellow travel in the sense of the theorem on stability of quasi-geodesics. Being asymptotic is an equivalence relation and the boundary $\D X$ at infinity of $X$ is the set of all equivalence classes of geodesic rays. It has a natural topology, derived for example from the compact-open topology. 

To every geodesic $\BR\to X$ we can associate two points in the boundary: one corresponding to the positive ray and the other to the negative one. It is in fact easy to see that they are different, which means that the pair consisting of the two end-points of a geodesic belongs to the so-called {\em double boundary} of $X$:
$$\D^2X=\D X\times\D X\setminus\Delta.$$
Here $\Delta$ is the diagonal. 

Finally, note that it follows from the stability of quasi-geodesics that any quasi-isometry $\Phi:X\to Y$ between two hyperbolic spaces induces maps
$$\D\Phi:\D X\to\D Y\text{ and }\D^2\Phi:\D^2X\to\D^2Y$$
between the associated boundaries and double boundaries. These maps are homeomorphisms. This implies in particular that, up to homeomorphism, the boundary $\D C(\Gamma,S)$ of the Cayley graph of a hyperbolic group with respect to a finite generating set does not depend on the generating set in question. Accordingly, we set 
$$\D\Gamma:=\D C(\Gamma,S)$$
and speak about the boundary of the group. In the same way we consider the double boundary $\D^2\Gamma:=\D\Gamma\times\D\Gamma\setminus\Delta$ of $\Gamma$. 

Note finally that every automorphism $\phi\in\Aut(\Gamma)$ induces a homeomorphism $\phi_*:\D\Gamma\to\D\Gamma$, equivariant in the following sense: $\phi_*(\gamma\cdot\theta)=\phi(\gamma)\cdot\phi_*(\theta)$. It goes without saying that the same remains true if we replace $\D\Gamma$ by the double boundary $\D^2\Gamma$.

\subsection{Useful facts}
We discuss now two technical facts needed later on. The first basically asserts the following: if we have a very long quasi-geodesic and we want to estimate the distance between its endpoints within a few percentage points, then it suffices to sum up the distances between a collection of consecutive intermediate points satisfying only the condition that they are miles away from each other. More precisely:

\begin{lem}\label{lem-almost right}
For every $\delta\ge 0$, $L\ge 1$, $A\ge 0$ and $\epsilon>0$ there is $R$ such that for any $\delta$-hyperbolic space $X$ and any $(L,A)$-quasi-geodesic $\Phi:[0,T]\to X$ we have
$$\sum_{i=1}^sd_X(\Phi(t_{i-1}),\Phi(t_i))\ge d_X(\Phi(0),\Phi(T))\ge (1-\epsilon)\sum_{i=1}^sd_X(\Phi(t_{i-1}),\Phi(t_i))$$
for any collection of points $0=t_0<t_1<\dots<t_s=T$ with $t_i-t_{i-1}\ge R$ for all $i=1,\dots,s$.
\end{lem}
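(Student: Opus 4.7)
The plan is to establish the first (upper) inequality directly from the triangle inequality, and to prove the second (lower) inequality by combining the stability of quasi-geodesics with an iteration argument.

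The key building block is an \emph{almost additivity} estimate for $\Phi$: for any three parameters $t_-<t_0<t_+$ in $[0,T]$, I claim
\[d_X(\Phi(t_-),\Phi(t_+))\geq d_X(\Phi(t_-),\Phi(t_0))+d_X(\Phi(t_0),\Phi(t_+))-2D,\]
where $D=D(\delta,L,A)$ is the constant produced by the stability of quasi-geodesics. Indeed, the restriction $\Phi|_{[t_-,t_+]}$ is an $(L,A)$-quasi-geodesic with the same endpoints as the geodesic $[\Phi(t_-),\Phi(t_+)]$, so they $D$-fellow-travel; in particular the intermediate point $\Phi(t_0)$ lies within $D$ of some $q$ on that geodesic, and splitting $[\Phi(t_-),\Phi(t_+)]$ at $q$ and applying the triangle inequality yields the displayed bound.

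Iterating this almost additivity by induction on $s$ gives
\[d_X(\Phi(0),\Phi(T))\geq \sum_{i=1}^s d_X(\Phi(t_{i-1}),\Phi(t_i))-2(s-1)D.\]
It remains to absorb the error $2(s-1)D$ into a small multiplicative distortion. The quasi-geodesic hypothesis gives $T\leq L\cdot d_X(\Phi(0),\Phi(T))+LA$, and the spacing hypothesis $t_i-t_{i-1}\geq R$ gives $s\leq T/R$; hence $s\leq \frac{L}{R}(d_X(\Phi(0),\Phi(T))+A)$. Substituting, the sum is bounded by $d_X(\Phi(0),\Phi(T))\bigl(1+\tfrac{2DL}{R}\bigr)+\tfrac{2DLA}{R}$, and one selects $R$ large in terms of $\delta,L,A,\epsilon$ (through $D$) so that this is at most $d_X(\Phi(0),\Phi(T))/(1-\epsilon)$.

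The one subtle point, and the main obstacle, is handling small values of $d_X(\Phi(0),\Phi(T))$, where the additive $\tfrac{2DLA}{R}$ is not obviously dominated. This is resolved by a dichotomy: when $s=1$ the desired inequality reads $d_X(\Phi(0),\Phi(T))\geq (1-\epsilon)d_X(\Phi(0),\Phi(T))$ and is trivial; when $s\geq 2$ the spacing condition forces $T\geq 2R$ and hence $d_X(\Phi(0),\Phi(T))\geq 2R/L-A$, which grows linearly in $R$, so the ratio of the additive error to $\epsilon\cdot d_X(\Phi(0),\Phi(T))$ tends to $0$ as $R\to\infty$. This bookkeeping fixes the required choice of $R=R(\delta,L,A,\epsilon)$ and completes the argument.
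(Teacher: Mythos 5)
Your proof is correct and follows essentially the same route as the paper's: both compare $\Phi$ with the geodesic joining its endpoints via stability of quasi-geodesics, accumulate an additive error of $2D$ per intermediate point to get $d_X(\Phi(0),\Phi(T))\ge\sum_i d_X(\Phi(t_{i-1}),\Phi(t_i))-2(s-1)D$, and then choose $R$ large to absorb the error. The only difference is bookkeeping: the paper bounds $s$ by the per-gap lower bound $d_X(\Phi(t_{i-1}),\Phi(t_i))\ge\tfrac1LR-A$, which converts the additive error directly into the multiplicative factor $\bigl(1-\tfrac{2D}{R/L-A}\bigr)$ and thereby avoids your dichotomy on small $d_X(\Phi(0),\Phi(T))$.
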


\begin{proof}
Let $\Psi:[0,T]\to X$ be a geodesic with endpoints $\Psi(0)=\Phi(0)$ and $\Psi(T)=\Phi(T)$. Because of the stability of quasi-geodesics, there is some $D=D(L,A)$ such that for each $i=0,\dots,s$ there exists $\bar t_i\in[0,T]$ with 
$$d(\Phi(t_i),\Psi(\bar t_i))\le D.$$
Note that we can choose $\bar t_0=t_0=0$ and $\bar t_s=t_s=T$. Note also that as long as $R$, and thus the gaps $t_i-t_{i-1}\ge R$, is chosen big enough we have that
$$0=\bar t_0<\bar t_1<\dots<\bar t_s=T.$$

\begin{figure}[h]
{
\leavevmode \SetLabels
\L(-.09*.43) $\Phi(0)=\Psi(0)$\\%
\L(.17*.43) $\Psi(t_1)$\\%
\L(.15*1.01) $\Phi(\bar{t}_1)$\\
\L(.335*.43) $\Psi(t_2)$\\%
\L(.33*-.07) $\Phi(\bar{t}_2)$\\
\L(.55*.07) $\Phi(t_{i-1})$\\%
\L(.55*.68) $\Psi(\bar{t}_{i-1})$\\
\L(.70*.68) $\Psi(t_i)$\\%
\L(.70*.04) $\Phi(\bar{t}_i)$\\
\L(.94*.68) $\Psi(T)=\Phi(T)$\\%
\endSetLabels
\begin{center}
\AffixLabels{\centerline{\epsfig{file =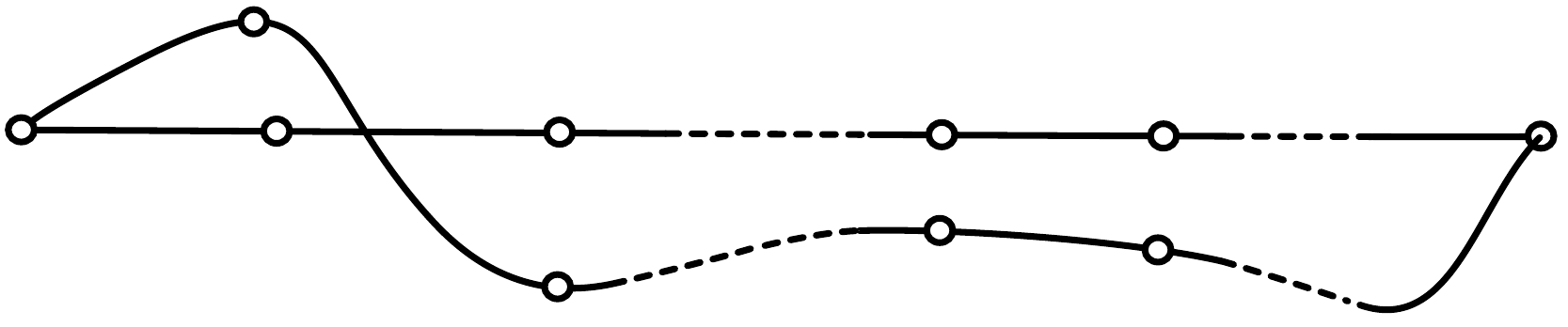,width=12cm,angle=0}}}
\vspace{-24pt}
\end{center}
}
\caption{Proof of Lemma \ref{lem-almost right}: Points on the geodesic $\Psi$ and quasi-geodesic $\Phi$.} \label{fig:Lem22a}
\end{figure}

Noting finally that $d_X(\Phi(t_{i-1}),\Phi(t_i))\ge \frac 1LR-A$ and that the latter quantity is positive if $R$ is chosen large enough, we get that
\begin{align*}
d_X(\Phi(0),\Phi(T))&=d_X(\Psi(0),\Psi(T))=\sum_{i=1}^sd_X(\Psi(\bar t_{i-1}),\Psi(\bar t_i))\\
&\ge \sum_{i=1}^sd_X(\Phi(t_{i-1}),\Phi(t_i))-2\cdot D\cdot(s-1)\\
&\ge \sum_{i=1}^sd_X(\Phi(t_{i-1}),\Phi(t_i))-2\cdot D\cdot\sum_{i=1}^s\frac{d_X(\Phi(t_{i-1}),\Phi(t_i))}{\frac 1LR-A}\\
&\ge \left(1-\frac{2\cdot D}{\frac 1LR-A}\right)\sum_{i=1}^sd_X(\Phi(t_{i-1}),\Phi(t_i))
\end{align*}

\begin{figure}[h]
{
\leavevmode \SetLabels
\L(.41*.14) $\Phi(t_{i-1})$\\%
\L(.41*1.06) $\Psi(\bar{t}_{i-1})$\\
\L(.58*1.06) $\Psi(t_i)$\\%
\L(.58*.02) $\Phi(\bar{t}_i)$\\
\L(.45*.6) $\leq D$\\%
\L(.61*.58) $\leq D$\\%
\endSetLabels
\begin{center}
\AffixLabels{\centerline{\epsfig{file =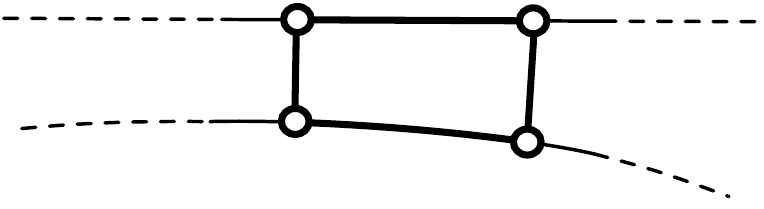,width=6.5cm,angle=0}}}
\vspace{-24pt}
\end{center}
}
\caption{Proof of Lemma \ref{lem-almost right}: Obtaining the inequality $d_X(\Psi(\bar{t}_{i-1},\Psi(\bar{t}_{i})) \geq d_X(\Phi(t_{i-1}),\Phi(t_i))-2\cdot D$.} \label{fig:Lem22b}
\end{figure}

As $D$ only depends on $L$ and $A$, for large enough $R$ we have
$$
\frac{2\cdot D}{\frac 1LR-A} \leq \epsilon
$$
and the claim follows.
\end{proof}

As a consequence we obtain that the {\em length} and {\em stable length}
$$\ell_X(\gamma)=\inf_{x\in X}d_X(x,\gamma(x))\text{ and }\Vert \gamma\Vert_X=\lim_{n\to\infty}\frac 1n\inf_{x\in X}d_X(x,\gamma^n(x))$$
of an isometry $\gamma:X\to X$ basically agree once the former is large enough:

\begin{lem}\label{lem-compare lengths}
Let $X$ be a hyperbolic space. For every $\epsilon>0$ there is some $L(\epsilon)$ such that 
$$\ell_X(\gamma)\ge\Vert\gamma\Vert_X\ge(1-\epsilon)\cdot\ell_X(\gamma)$$
for every isometry $\gamma:X\to X$ with $\ell_X(\gamma)\ge L(\epsilon)$.
\end{lem}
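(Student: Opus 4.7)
The first inequality $\ell_X(\gamma)\ge\Vert\gamma\Vert_X$ is immediate from subadditivity: the triangle inequality yields $d_X(x,\gamma^n x)\le n\cdot d_X(x,\gamma x)$ for every $x\in X$ and every $n\ge 1$, so passing to the infimum in $x$ gives $\inf_{x}d_X(x,\gamma^n x)\le n\cdot\ell_X(\gamma)$. Dividing by $n$ and letting $n\to\infty$ produces $\Vert\gamma\Vert_X\le\ell_X(\gamma)$.

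For the reverse inequality, fix a small $\eta>0$ (to be adjusted below) and pick $y\in X$ with $d_X(y,\gamma y)\le\ell_X(\gamma)+\eta$. Writing $L:=\ell_X(\gamma)$ and $y_i:=\gamma^i y$, the orbit satisfies $d_X(y_{i-1},y_i)=d_X(y,\gamma y)\in[L,L+\eta]$ for every $i$. The plan is to show that the concatenation $\Phi$ of the geodesic segments $[y_{i-1},y_i]$ is an $(L_0,A_0)$-quasi-geodesic with constants $L_0,A_0$ depending only on the hyperbolicity constant $\delta$ of $X$, provided $L$ exceeds a threshold depending on $\delta$ and $\eta$. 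Granting this, I would apply Lemma~\ref{lem-almost right} to $\Phi$ with the sampling given by the corner points $y_i$: since each gap $d_X(y_{i-1},y_i)$ is at least $L$, taking $L$ larger than the threshold $R=R(\delta,\epsilon,L_0,A_0)$ from that lemma yields
$$d_X(y,\gamma^n y)\ge (1-\epsilon)\sum_{i=1}^n d_X(y_{i-1},y_i)\ge (1-\epsilon)\,n\,L.$$
A standard application of Fekete's lemma to the subadditive sequence $n\mapsto d_X(y,\gamma^n y)$, combined with the Lipschitz bound $|d_X(y,\gamma^n y)-d_X(y',\gamma^n y')|\le 2d_X(y,y')$, shows that $\lim_{n\to\infty}d_X(y,\gamma^n y)/n$ exists, is independent of $y$, and equals $\Vert\gamma\Vert_X$. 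Dividing the displayed inequality by $n$ and letting $n\to\infty$ one then concludes $\Vert\gamma\Vert_X\ge(1-\epsilon)\ell_X(\gamma)$.

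The main obstacle is verifying that $\Phi$ is a quasi-geodesic with uniformly controlled constants. The key step is to show that the ``kink'' of $\Phi$ at each corner $y_i$---measured by the Gromov product $(y_{i-1}|y_{i+1})_{y_i}$---is small. Since $\gamma$ acts by isometries, this product is independent of $i$ and equals $(L+\eta)-\tfrac12 d_X(y,\gamma^2 y)$. Setting $k:=2(L+\eta)-d_X(y,\gamma^2 y)\ge 0$, the $\delta$-thin triangle with vertices $y,\gamma y,\gamma^2 y$ shows that the sides $[\gamma y,y]$ and $[\gamma y,\gamma^2 y]$ fellow-travel for distance approximately $k/2$ from $\gamma y$; in particular, the point $p\in[\gamma y,y]$ at distance $k/2$ from $\gamma y$ lies within $O(\delta)$ of a point $q\in[\gamma y,\gamma^2 y]$ at the same distance from $\gamma y$, while $\gamma p$ lies on $[\gamma y,\gamma^2 y]$ at distance $L+\eta-k/2$ from $\gamma y$. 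Hence
$$d_X(p,\gamma p)\le d_X(p,q)+d_X(q,\gamma p)\le O(\delta)+(L+\eta-k),$$
and combining with $d_X(p,\gamma p)\ge\ell_X(\gamma)=L$ forces $k\le\eta+O(\delta)$. Thus every corner of $\Phi$ has Gromov product at most $(\eta+O(\delta))/2$, and once $L$ is taken much larger than $\delta$ and $\eta$, the mild extension of ``local geodesics are quasi-geodesics'' to local quasi-geodesics alluded to in Section~\ref{sec-hyperbolic} implies that $\Phi$ is a global $(L_0,A_0)$-quasi-geodesic with $L_0,A_0$ depending only on $\delta$, completing the proof.
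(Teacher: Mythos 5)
Your proof is correct, and its overall strategy is the same as the paper's: build the broken geodesic $\Phi$ through the $\gamma$-orbit of a point of (nearly) minimal displacement, show that $\Phi$ is a quasi-geodesic with constants depending only on $\delta$, and then feed the corner points into Lemma~\ref{lem-almost right}. The genuine difference lies in how the quasi-geodesicity of $\Phi$ is certified. The paper takes a point $x_0$ that \emph{exactly} realizes $\ell_X(\gamma)$; then $\Phi$ is an honest $\ell_X(\gamma)$-local geodesic (a shortcut across a corner would produce a point $q$ on $\Phi$ with $d_X(q,\gamma(q))<\ell_X(\gamma)$), so the stated form of ``local geodesics are quasi-geodesics'' applies directly. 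You instead take an $\eta$-almost minimizer and bound the Gromov product at each corner by $O(\eta+\delta)$ --- your corner estimate is sound, modulo the usual looseness in the $O(\delta)$ bookkeeping --- which forces you to invoke the local-to-global principle for local \emph{quasi}-geodesics, precisely the version the paper announces it will avoid using. What your route buys is robustness: it does not require the infimum $\inf_x d_X(x,\gamma(x))$ to be attained, which for an arbitrary isometry of an arbitrary (possibly non-proper) geodesic hyperbolic space is not automatic and is assumed without comment in the paper's proof. The price is the extra corner analysis and the stronger local-to-global input. One small shared gap: both arguments end by identifying $\Vert\gamma\Vert_X=\lim_n\frac1n\inf_x d_X(x,\gamma^n(x))$ with $\lim_n\frac1n d_X(y,\gamma^n(y))$ for the chosen base point $y$; this is true (displacement of $\gamma^n$ is minimized, up to a uniform additive constant, along the quasi-axis), but your appeal to Fekete plus the Lipschitz bound only shows the latter limit exists and is independent of $y$ --- it does not by itself rule out that $\inf_x d_X(x,\gamma^n(x))$ is approached by points $x_n$ escaping to infinity, so a word on why the infimum is essentially computed on the quasi-axis would be needed to make that step airtight.
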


\begin{proof}
Note that the first inequality holds trivially in full generality because $d_X(x,\gamma^n(x))\le n\cdot d_X(x,\gamma(x))$ for any isometry $\gamma$ of a metric space and for every point $x$ in the said space. We concentrate now on the second inequality.

To begin with, let $D$ be such that any $D$-local geodesic in $X$ is an $(L,A)$-quasi-geodesic. Now, given $\epsilon>0$, let $R=R(\delta, L, A, \epsilon)$ so that the conclusion of Lemma \ref{lem-almost right} holds for that $\epsilon$ and all $(L,A)$-quasi-isometries. Finally, set $L(\epsilon)=\max\{D,R\}$ and suppose that $\gamma:X\to X$ is an isometry with
$$\ell_X(\gamma)\ge L(\epsilon)$$
and that $x_0\in X$ is such that $\ell_X(\gamma)=d_X(x_0,\gamma(x_0))$. Fix $\phi:[0,\ell_X(\gamma)]\to X$ to be a geodesic segment with $\phi(0)=x_0$ and $\phi(\ell_X(\gamma))=\gamma(x_0)$. 

Consider now the piecewise geodesic path $\Phi:\BR\to X$ whose restriction to the interval $[k\cdot\ell_X(\gamma),(k+1)\cdot\ell_X(\gamma)]$ is given by $t\mapsto \gamma^k(\phi(t-k\cdot\ell_X(\gamma)))$. This path is not only continuous and piecewise geodesic, but also a $\ell_X(\gamma)$-local geodesic: otherwise we could find a point $y\in\Phi(\BR)$ with $d_X(y,\gamma(y))<d_X(x_0,\gamma(x_0))$, contradicting the choice of $x_0$. 

Anyways, since $\Phi$ is a $D$-local geodesic, it follows from the choice of $D$ that it is also an $(L,A)$-quasi-geodesic. Now, since by construction we also have 
$$d_X(\Phi(k\cdot\ell_X(\gamma)),\Phi((k+1)\cdot\ell_X(\gamma)))=\ell_X(\gamma)\ge R$$
we obtain from Lemma~\ref{lem-almost right} that
\begin{align*}
d_X(x_0,\gamma^{n}(x_0))&=d_X(\Phi(0),\Phi(n\cdot\ell_X(\gamma)))\\
&\geq   (1-\epsilon) \sum_{k=0}^{n-1} d_X(\Phi(k\cdot\ell_X(\gamma)),\Phi((k+1)\cdot\ell_X(\gamma)))\\
&= (1-\epsilon)\cdot n\cdot d_X(\Phi(0),\Phi(\ell_X(\gamma)))\\
&= (1-\epsilon)\cdot n\cdot \ell_X(\gamma).
\end{align*} 
From this, it follows that
$$\Vert\gamma\Vert_X = \lim_{n\to\infty} \frac{1}{n} d_X(x_0,\gamma^{n}(x_0))\ge (1-\epsilon)\cdot\ell_X(\gamma)$$
as desired. We have proved Lemma \ref{lem-almost right}. 
\end{proof}

\section{}\label{sec-currents}
We recall now some facts about the geodesic flow associated to a hyperbolic group, and about the associated space of currents. We refer to Gromov's foundational paper \cite{Gromov} and to \cite{CP} for the construction of the geodesic flow and to \cite{Javi-Chris,Bonahon86,Bonahon88,Bonahon-currents-group} for facts on currents. We point out that currents have been mostly studied in the setting of surface groups (or free groups) but that the pretty general statements we will need below are either discussed in full generality in \cite{Bonahon-currents-group}, or follow using the same argument as in the surface group case.

\subsection{The geodesic flow}
We recall now the upshot of the construction due to Gromov \cite[Section 8]{Gromov} of a geodesic flow for a hyperbolic group. 

Gromov proves that for each hyperbolic group $\Gamma$ there is a proper geodesic metric space $\CU=\CU(\Gamma)$ equipped with an isometric action $\Gamma\actson\CU$, an isometric involution $i:\CU\to\CU$, and a flow $(\phi_t)_{t\in\BR}$ satisfying
$$\gamma(\phi_t(v))=\phi_t(\gamma(v)),\ i(\phi_t(v))=\phi_{-t}(i(v)),\text{ and }\gamma(i(v))=i(\gamma(v))$$
for all $\gamma\in\Gamma$, $t\in\BR$ and $v\in\CU$.
Moreover, the action $\Gamma\actson\CU$ is properly discontinuous and cocompact. In particular, $\CU$ is quasi-isometric to $\Gamma$ and thus hyperbolic. It follows that we can identify $\D\CU$ and $\D\Gamma$.

Furthermore, the flow $(\phi_t)$ has the property that for all $v\in\CU$ the map $\BR\to\CU$, $t\mapsto\phi_t(v)$ is a quasi-isometric embedding with uniform constants in $v$. In particular, to every $v$, one can associate two points 
$$v^+=\lim_{t\to\infty}\phi_t(v)\text{ and }v^-=\lim_{t\to-\infty}\phi_t(v)$$
in $\D\CU=\D\Gamma$.
A feature---possibly the key feature---of the flow $\phi_t$ is that the fibers of the map
\begin{equation}\label{eq-endpoints}
\CU\to\D^2\Gamma,\ \ v\mapsto(v^+,v^-)
\end{equation}
are precisely the orbits of the flow $\phi_t$. In fact, the map \eqref{eq-endpoints} is the first coordinate of a homeomorphism $\CU\simeq\D^2\Gamma\times\BR$.

Finally, the space $\CU$ is canonical in the following sense. If $(\CU',\phi_t',i')$ is any further space with these properties, then there is a map $\CU\to\CU'$ which commutes with the actions of $\Gamma$, which conjugates $i$ and $i'$, and which maps flowlines of $\phi_t$ to flowlines of $\phi_t'$---said differently, the space $\CU$ is unique up to orbit equivalence.

\begin{bem}
We have chosen to denote the space by $\CU$ to remind the reader of the {\em unit} tangent bundle of a Riemannian manifold $M$, for it is in that space on which the geodesic flow lives. The involution $i$ is in that case given by $i(v)=-v$.
\end{bem}

Note that if the group $\Gamma$ is torsion free, then the discreteness of the action $\Gamma\actson\CU$ implies that it is actually free.

\subsection{Transversals}
Continuing with the same notation, suppose now that $\Gamma$ is not only hyperbolic but also torsion free. By a {\em transversal} for the geodesic flow we mean a compact subset $\tau\subset\CU$ with the following properties:
\begin{itemize}
\item The map $\tau\times[-\epsilon,\epsilon]\to\CU$, $(v,t)\mapsto\phi_t(v)$ is an embedding for some $\epsilon>0$.
\item The restriction of the covering map $\pi:\CU\to\CU/\Gamma$ to $\tau$ is an embedding.
\item For every $v\in\CU$ there are $t>0$ and $g\in\Gamma$ with $\phi_t(v)\in g(\tau)$.
\end{itemize}
We make two comments which will come in handy later on:
\begin{enumerate}
\item A transversal $\tau$ has, as a subset of $\CU$, empty interior. We will say however that a point $x\in\tau$ is an interior point if, for all $\epsilon>0$ sufficiently small, it is an interior point of the image of the map $\tau\times[-\epsilon,\epsilon]\to\CU$, $(v,t)\mapsto\phi_t(v)$. A point which is not interior is a boundary point and we denote by $\D\tau$ the set of all boundary points.
\item A measure $\mu$ on $\CU$ invariant under $\Gamma$ and the flow $\phi_t$ induces a measure $\mu^\tau$ with
$$\mu^\tau(U)=\epsilon^{-1}\cdot\mu(\{\phi_t(v)\vert t\in[0,\epsilon],v\in U\})$$
for all sufficiently small $\epsilon>0$ and all $U\subset\tau$ open.
\end{enumerate}

After these comments, we discuss briefly how compactness of $\CU/\Gamma$ and the fact that $\pi$ is a covering map imply that transversals exist. In fact, as in \cite{Bonahon-currents-group}, compactness of $\CU/\Gamma$ implies that one can cover $\CU/\Gamma$ with finitely many flow-boxes $B_i\simeq A_i\times[0,1]$. Here $A_i\subset\CU/\Gamma$ and the embedding $A_i\times[0,1]\hookrightarrow B_i\subset\CU/\Gamma$ is given by $(a,t)\mapsto\phi_t(a)$. Moreover, one can choose the sets $A_i$ to be disjoint of each other and of small diameter. This last property implies that they lift homeomorphically to sets $\tilde A_i\subset\CU$. The set $\tau=\cup_i\tilde A_i$ is a transversal.

Note that the same argument shows that one can construct two transversals $\tau\subset\hat\tau\subset\CU$ with the property that $\tau\cap\D\hat\tau=\emptyset$. Now, for all $t\in[0,\epsilon]$ we have that the set
$$\tau_t=\{v\in\hat\tau\,\vert\, d_{\CU}(v,\tau)\le t\}$$
is again a transversal. Moreover, since $\epsilon$ is small, we have that 
$$\D\tau_{t_1}\cap\D\tau_{t_2}=\emptyset$$
for all $t_1\neq t_2$ positive and smaller than $\epsilon$. Since we have an uncountable collection of disjoint sets, most of them have to have measure $0$ with respect to any locally finite measure on $\tau$. This means that for any given measure one can get transversals whose boundaries have vanishing measure. We state this formally as follows:

\begin{lem}\label{lem-good transversal}
If $\mu$ is a locally finite measure on $\CU$ invariant under $\Gamma$ and the flow $\phi_t$, then there is a transversal $\tau\subset\CU$ with $\mu^\tau(\D\tau)=0$.\qed
\end{lem}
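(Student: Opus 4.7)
The plan is to realize the desired transversal as a small thickening of a smaller one, and then to run a pigeonhole argument over the thickening parameter in order to make the boundary $\mu$-null. The idea is already sketched in the paragraph immediately preceding the statement; what follows is how I would turn that sketch into a proof.

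First I would invoke the flow-box construction recalled just above the lemma to produce two transversals $\tau\subset\hat\tau\subset\CU$ with $\tau\cap\D\hat\tau=\emptyset$. Compactness of $\hat\tau$ together with $\tau\cap\D\hat\tau=\emptyset$ yields some $\epsilon_0>0$ so small that for every $t\in(0,\epsilon_0)$ the set
$$\tau_t=\{v\in\hat\tau\,\vert\,d_\CU(v,\tau)\le t\}$$
is itself a transversal and $\D\tau_t\subset\hat\tau\setminus\D\hat\tau$. As observed in the text, the boundaries are pairwise disjoint: $\D\tau_{t_1}\cap\D\tau_{t_2}=\emptyset$ whenever $0<t_1<t_2<\epsilon_0$.

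Next comes the pigeonhole step. Local finiteness of $\mu$ on $\CU$ combined with compactness of $\hat\tau$ implies that $\mu^{\hat\tau}$ is a \emph{finite} Borel measure on $\hat\tau$. In any finite measure space a family of pairwise disjoint measurable sets can contain only countably many members of positive measure; since $(0,\epsilon_0)$ is uncountable, there must exist $t_0\in(0,\epsilon_0)$ with $\mu^{\hat\tau}(\D\tau_{t_0})=0$. The last point is that, since the induced measure on any transversal is obtained from the same ambient invariant $\mu$ via the same flow-box formula, the restriction of $\mu^{\hat\tau}$ to Borel subsets of $\tau_{t_0}$ agrees with $\mu^{\tau_{t_0}}$. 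Hence $\mu^{\tau_{t_0}}(\D\tau_{t_0})=0$, and $\tau_{t_0}$ is the transversal we wanted.

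The step that I expect to require the most care is the verification of the two auxiliary claims packaged into the pigeonhole argument: that $\D\tau_t$ really is the ``metric sphere at distance $t$ from $\tau$ inside $\hat\tau$'' (so that distinct values of $t$ give disjoint boundaries), and that $\mu^{\tau_{t_0}}$ coincides with the restriction of $\mu^{\hat\tau}$. Both should reduce to an unsurprising local calculation in a flow box, exploiting the fact that, for $t_0<\epsilon_0$ sufficiently small, a single flow box around any point of $\tau_{t_0}$ can be used simultaneously as a flow box for $\hat\tau$.
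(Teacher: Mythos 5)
Your proposal is correct and follows essentially the same route as the paper: the text preceding the lemma already carries out the construction of $\tau\subset\hat\tau$, the family $\tau_t$ with pairwise disjoint boundaries, and the uncountability/pigeonhole argument, which is why the paper states the lemma with no further proof. Your extra care in checking that $\mu^{\tau_{t_0}}$ agrees with the restriction of $\mu^{\hat\tau}$ is a reasonable tightening of a point the paper leaves implicit.
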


\subsection{First return map}
Again using the same notation as above, suppose that $\tau\subset\CU$ is a transversal for the flow $\phi_t$. Recalling that for every $v\in\tau$ there is some $t>0$ and $g\in\Gamma$ with $\phi_t(v)\in g(\tau)$, we define a number of maps. First we consider the $n$-th time that the orbit of $v\in\tau$ meets $\Gamma\tau$. More concretely, given $n\in\BN$ and $v\in\tau$ we set $\rho(0,v)=0$ and define inductively
$$\rho(n,v)=\min\{t>\rho(n-1,v)\text{ such that there is a }g\in\Gamma\text{ with }\phi_t(v)\in g\tau\}.$$
In this way we have a map
$$\rho:\BN\times\tau\to\BR_+,\ (n,v)\mapsto\rho(n,v)$$
and we refer to $\rho(n,v)$ as the {\em $n$-th return time} of $v$. We will also be interested in the map
$$T:\BN\times\tau\to\CU,\ \ (n,v)\mapsto T_n(v)\stackrel{\text{def}}=\phi_{\rho(n,v)}(v).$$
By construction, this map takes values in the subset $\Gamma\tau\subset\CU$. We have thus for all $(n,v)\in\BN\times\tau$ an element $g_n(v)\in\Gamma$ with
$$T_n(v)=\phi_{\rho(n,v)}(v)\in g_n(v)\tau.$$
Note that the second condition in the definition of transversal implies that $g_n(v)$ is indeed unique.

\begin{figure}[h]
{
\leavevmode \SetLabels
\L(.14*.0) $\tau$\\%
\L(.29*-.02) $g_1(v)(\tau)$\\%
\L(.50*-.03) $g_2(v)(\tau)$\\%
\L(.64*-.05) $g_{i-1}(v)(\tau)$\\%
\L(.84*-.02) $g_{i}(v)(\tau)$\\%
\L(.1*.64) $v$\\%
\L(.36*.42) $T_1(v)$\\%
\L(.578*.15) $T_2(v)$\\
\L(.72*.39) $T_{i-1}(v)$\\
\L(.868*.77) $T_i(v)$\\
\endSetLabels
\begin{center}
\AffixLabels{\centerline{\epsfig{file =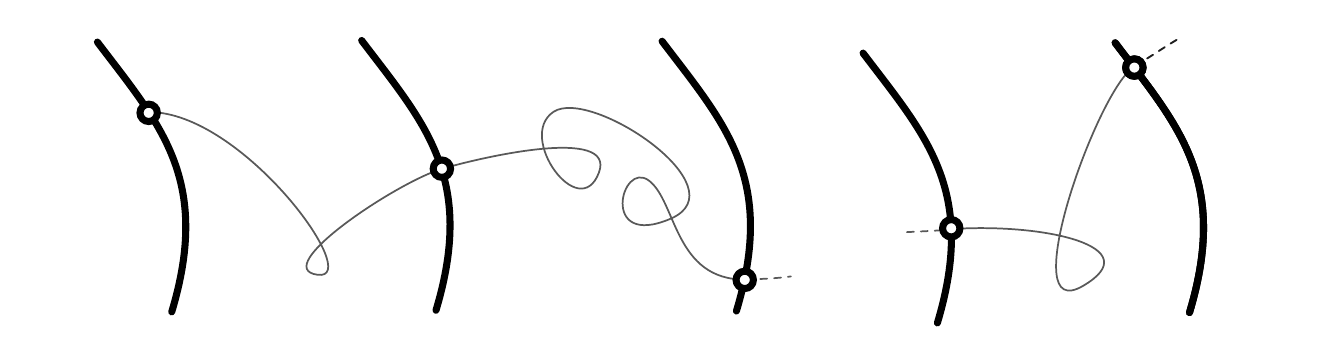,width=12.0cm,angle=0}}}
\end{center}
}
{
\leavevmode \SetLabels
\L(.44*.7) $v$\\%
\L(.295*.52) $g_1(v)^{-1}(T_1(v))$\\
\L(.53*.17) $g_2(v)^{-1}(T_2(v))$\\
\L(.535*.35) $g_{i-1}(v)^{-1}(T_{i-1}(v))$\\
\L(.23*.85) $g_i(v)^{-1}(T_i(v))$\\
\endSetLabels
\begin{center}
\AffixLabels{\centerline{\epsfig{file =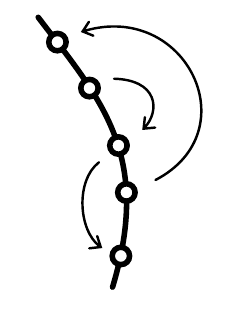,width=3.0cm,angle=0}}}
\vspace{-24pt}
\end{center}
}
\caption{The maps $g_n, T_n$ (above) and $P$ (below)
} \label{fig:Return1}
\end{figure}

Anyways, we have a map
$$g:\BN\times\tau\to\Gamma,\ (n,v)\mapsto g_n(v)$$
which better remains unnamed. Finally, we have the {\em first return map}
$$P:\tau\to\tau,\ \ P(v)=g_1(v)^{-1}T_1(v).$$
More generally, we have the following cocycle equation:
\begin{equation}\label{cocycle}
T_{n+k}(v)=g_k(v)\cdot T_n(P^k(v)).
\end{equation}

Note also that it follows directly from the definitions that, for fixed $n$, the maps $\rho_n(\cdot),T_n(\cdot),g_n(\cdot)$ and $P^n(\cdot)$ are continuous at $v\in\tau$ unless one of the points $v,P(v),\dots,P^{n-1}(v)$ belongs to $\D\tau$. Combining this observation with Lemma \ref{lem-good transversal} we get:

\begin{lem}\label{lem-very good transversal}
Let $\mu$ be a locally finite measure on $\CU$ invariant under $\Gamma$ and the flow $\phi_t$. Then there is a transversal $\tau\subset\CU$ with $\mu^\tau(\D\tau)=0$ and such that for all $n$ there is a closed set $\sigma_n\subset\tau$ with $T_n(\cdot)$ continuous on $\tau\setminus\sigma_n$ and with $\mu^\tau(\sigma_n)=0$.\qed
\end{lem}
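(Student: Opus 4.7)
The plan is to apply Lemma~\ref{lem-good transversal} to secure a transversal $\tau$ with $\mu^\tau(\partial\tau)=0$, and then to propagate this single null condition to all forward iterates of the first return map $P$ using the fact that $P$ preserves $\mu^\tau$.

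Concretely, I would first invoke Lemma~\ref{lem-good transversal} to fix a transversal $\tau\subset\CU$ with $\mu^\tau(\partial\tau)=0$. Motivated by the observation made just above the statement of the lemma---that $T_n$ is continuous at $v$ as soon as none of the points $v,P(v),\dots,P^{n-1}(v)$ lies in $\partial\tau$---the natural candidate is
$$\sigma_n \;=\; \bigcup_{k=0}^{n-1} P^{-k}(\partial\tau).$$
Continuity of $T_n$ on $\tau\setminus\sigma_n$ is then literally the content of that observation. To check that $\sigma_n$ is closed, I would argue that its complement is
$$\bigcap_{k=0}^{n-1} P^{-k}\bigl(\tau\setminus\partial\tau\bigr);$$
since $\tau\setminus\partial\tau$ is open and since the cited observation says $P^k$ is continuous at each point of this intersection, the set is open in $\tau$.

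The heart of the proof is therefore the measure estimate $\mu^\tau(\sigma_n)=0$. For this I would appeal to the standard cross-section principle that the first return map $P:\tau\to\tau$ preserves the induced transverse measure $\mu^\tau$: on any Borel $U\subset\tau$ small enough that $\rho(1,\cdot)$ varies by less than some given $\epsilon$, flow-invariance of $\mu$ combined with the defining identity
$$\mu^\tau(U)=\epsilon^{-1}\,\mu\bigl(\{\phi_s v : v\in U,\ s\in[0,\epsilon]\}\bigr)$$
yields $\mu^\tau(P(U))=\mu^\tau(U)$, and a general Borel set is handled by partitioning according to approximate return time and letting $\epsilon\downarrow 0$. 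In particular $\mu^\tau(P^{-k}(\partial\tau))=\mu^\tau(\partial\tau)=0$ for every $k$, hence $\mu^\tau(\sigma_n)\le\sum_{k=0}^{n-1}\mu^\tau(P^{-k}(\partial\tau))=0$.

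The only step that is not purely formal is the invariance of $\mu^\tau$ under $P$; everything else reduces to the preceding lemma, the cited continuity observation, and a countable union estimate. The reason I expect this to be the main potential obstacle is that $P$ is typically discontinuous exactly on $P^{-1}(\partial\tau)$, so the flow-box comparison must be executed on pieces where the return time is essentially constant and then patched together—routine in the theory of sections of flows, but the only place where more than a one-line argument is required.
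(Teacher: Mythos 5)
Your proposal is correct and follows exactly the route the paper intends: the lemma is stated with no written proof beyond the remark that $T_n$ is continuous at $v$ unless some $P^k(v)$, $k<n$, lies in $\partial\tau$, combined with Lemma~\ref{lem-good transversal}; your choice $\sigma_n=\bigcup_{k=0}^{n-1}P^{-k}(\partial\tau)$ together with the $P$-invariance of $\mu^\tau$ (a fact the paper also uses, without proof, in Lemma~\ref{lem-subadditive}) is precisely the intended argument. You have simply supplied the details—closedness of $\sigma_n$ and the cross-section invariance—that the authors leave implicit.
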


\begin{bem}
Given how often we used the word ``return" it is likely that the reader has figured out that we have been secretly thinking of the flow induced by $\phi_t$ on $\CU/\Gamma$ and that we have been identifying $\tau$ with its projection to $\CU/\Gamma$. This is correct. However, there are two reasons not to present things from that point of view. First, the map $g:\BN\times\tau\to\Gamma$ is less natural if we work in the quotient. Second, we would be forced to either introduce additional notation for the maps in the quotient, or to identify maps in the quotient and in the universal cover, risking to confuse the reader and ourselves.
\end{bem}

\subsection{Oriented currents}
We continue with the same notation as all along. An {\em oriented current} is a $\Gamma$-invariant Radon measure on the double boundary $\D^2\Gamma$ of $\Gamma$. Recall that a Borel measure is Radon if it is locally finite and inner regular. We denote the space of all oriented currents, endowed with the weak-*-topology, by $\CC^+(\Gamma)$. The space $\CC^+(\Gamma)$ of oriented currents is a cone in a linear space.

Recall now that every automorphism $\Gamma\to\Gamma$ induces a homeomorphism $\D^2\Gamma\to\D^2\Gamma$. It thus induces a homeomorphism $\CC^+(\Gamma)\to\CC^+(\Gamma)$ between the spaces of oriented currents. It is easy to see that this map is the identity if the automorphism we began with was inner. In particular, the group $\Out(\Gamma)=\Aut(\Gamma)/\Inn(\Gamma)$ of inner automorphisms acts on $\CC^+(\Gamma)$.

We now give examples of oriented currents. We can associate to the conjugacy class of every non-trivial primitive group element $\gamma\in\Gamma$ an element in $\CC^+(\Gamma)$ as follows. The element $\gamma$ has, when acting on $\D\Gamma$, a unique attractive and a unique repelling fixed point $\gamma_{+}$ and $\gamma_-$. They are different, meaning that $(\gamma_{-},\gamma_{+})\in\partial^2\Gamma$. The $\Gamma$-orbit $\Gamma(\gamma_-,\gamma_+)$ of this point is a discrete subset of the double boundary $\partial^2\Gamma$ and depends only on the conjugacy class of $\gamma$. Abusing terminology, we denote by $\gamma$ also the oriented current given by the atomic measure supported on $\partial^2\Gamma$, where each point in $\Gamma(\gamma_-,\gamma_+)$ is an atom of weight one. If we denote by $\CS^+(\Gamma)$ the set of all conjugacy classes in $\Gamma$, then it is a theorem by Bonahon \cite{Bonahon-currents-group} that  
$$\mathbb{R}_+\CS^+(\Gamma)=\{t\cdot\gamma\,|\,t\in\mathbb{R}_+, \gamma\in\CS^+(\Gamma)\}$$ 
is dense in $\CC^+(\Gamma)$.

\begin{bem}
Recall now that $\Gamma$ is supposed to be torsion free. This implies that the stabilizers of points in $\D\Gamma$ are not only virtually cyclic, but actually cyclic. In particular we could, as was done in \cite{Bonahon-currents-group}, associate to a non-primitive element a current as follows: write $\gamma$ as $\eta^n$ with $\eta$ primitive and $n$ positive and associate to $\gamma$ the current $n\cdot\eta$.
\end{bem}

Finally note that via the homeomorphism
$$\CU\simeq \D^2\Gamma\times\BR$$
we can identify oriented currents with measures on $\CU$ invariant under $\Gamma$ and under the flow $\phi_t$. More concretely, if $\mu$ is an oriented current then there is a unique measure $\hat\mu$ on $\CU$ that satisfies the following condition: if $A\subset\CU$ is such that 
$$\{\phi_t(v)\vert t\in\BR\}\cap A=\{v\}$$
for all $v\in A$ then
$$\hat\mu(\cup_{s\in[0,t]}\phi_s(A))=t\cdot\mu(\{(v^+,v^-)\in\D^2\Gamma\,\vert\, v\in A\}).$$
Note also that $\hat\mu$, being invariant under $\phi_t$, induces a measure $\mu^\tau$ on every transversal $\tau$. This measure can be directly described as follows:
$$\mu^\tau(A)=\mu(\{(v^+,v^-)\in\D^2\Gamma\,\vert\, v\in A\})$$
for every $A\subset\tau$ measurable with $\{\phi_t(v)\vert t\in\BR\}\cap A=\{v\}$ for all $v\in A$.

\subsection{Currents and the flip}
The double boundary $\D^2\Gamma$ is endowed with the so-called {\em flip} $(\theta,\eta)\mapsto(\eta,\theta)$. A {\em current} is a $\Gamma$-invariant Radon measure on $\D^2\Gamma/{\text{flip}}$. Equivalently, a current is a flip-invariant oriented current. From the point of view of measures on $\CU$, geodesic currents correspond to measures which are not only invariant under the flow $\phi_t$ but also under the involution $i:\CU\to\CU$. 

In general, the oriented current associated to the conjugacy class of $\gamma\in\Gamma$ is not flip-invariant: its image is namely the current associated to $\gamma^{-1}$. However, one can associate to every oriented current $\mu$ the flip-invariant current $\frac 12(\mu+i_*\mu)$. In this way we associate to the conjugacy class of a primitive element $\gamma\in\Gamma$ a current which we denote once again by $\gamma$. 

\begin{bem}
The reason to work with both currents and oriented currents is the following. From a technical point of view, more concretely from the point of view of bookkeeping, working with oriented currents is slightly simpler. Possibly, oriented currents are in general also more natural. However, in some situations one is interested in objects which are intrinsically unoriented---such as laminations on surfaces. This is why most of the literature is about currents \cite{Javi-Chris,Bonahon86,Bonahon88}. In any case, as observed in \cite{Bonahon-currents-group}, it is easy to pass between these points of view.
\end{bem}

\section{}\label{sec-meat}
Continuing with the notation of the previous section, let $\Gamma$ be a torsion free hyperbolic group and $(\CU,\phi_t,i)$ the corresponding geodesic flow. Our next goal is to define the {\em stable length of an oriented current with respect to an action $\Gamma\actson X$} and prove that it is continuous. More precisely we prove Theorem \ref{sat-length} from the introduction, which we recall here for convenience:

\begin{named}{Theorem \ref{sat-length}}
Let $\Gamma\actson X$ be a discrete and cocompact isometric action of a torsion free Gromov hyperbolic group  on a geodesic metric space and let $\CC^+(\Gamma)$ be the space of oriented currents of $\Gamma$. There is a unique continuous function
$$\Vert\cdot\Vert_X:\CC^+(\Gamma)\to\BR_+$$
with $\Vert\gamma\Vert_X=\lim_{n\to\infty}\frac 1n\inf_{x\in X}d_X(x,\gamma^n(x))$ for every non-trivial $\gamma\in\Gamma$. Moreover, the function $\Vert\cdot\Vert_X$ is flip-invariant.
\end{named}

First, recall that by Lemma \ref{lem-cone off} the space $X$ is a convex subset of a contractible hyperbolic space $\hat X$ on which our group $\Gamma$ still acts. Since $X$ is convex in $\hat X$ we have that $d_X(x,x')=d_{\hat X}(x,x')$ for all $x,x'\in X\subset\hat X$. In particular, the stable lengths of $\gamma\in\Gamma$ with respect to $X$ and $\hat X$ agree:
$$\Vert\gamma\Vert_X=\Vert\gamma\Vert_{\hat X}.$$
We can thus assume that $X$ was contractible to begin with.

After this preliminary (and not very important) comment, we start with the discussion of the proof of Theorem \ref{sat-length}. Possibly, the main difficulty when proving this theorem is to find a candidate for $\Vert\mu\Vert_X$. To construct such a candidate we choose a transversal $\tau\subset\CU$ for the geodesic flow and recall the definition of the map 
$$T:\BN\times\tau\to\CU$$
given above. Note also that since $\Gamma$ acts cocompactly and by isometries on both $\CU$ and $X$, it follows from the Milnor-Schwarz theorem that there is a $\Gamma$-equivariant quasi-isometry 
$$\Phi:\CU\to X.$$
The assumption that $X$ is contractible implies that $\Phi$ can be chosen to be continuous---from a technical point this is a useful property. Anyways, given an oriented current $\mu\in\CC^+(\Gamma)$ and $k\ge 1$ we consider the quantity
$$L_k(\mu,\tau,\Phi)=\int_{\tau}d_X(\Phi(v),\Phi(T_k(v)))d\mu^\tau(v).$$
Here, as was the case earlier, $\mu^\tau$ is the measure induced by the current $\mu$ on the transversal $\tau$.

\begin{lem}\label{lem-subadditive}
The sequence $k\to L_k(\mu,\tau,\Phi)$ is sub-additive.
\end{lem}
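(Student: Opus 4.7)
The plan is to combine the cocycle equation from \eqref{cocycle} with the triangle inequality, the $\Gamma$-equivariance of $\Phi$, and the fact that the first return map $P$ preserves the transverse measure $\mu^\tau$.

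First, I would apply the triangle inequality pointwise: for every $v\in\tau$,
$$d_X(\Phi(v),\Phi(T_{k+n}(v)))\le d_X(\Phi(v),\Phi(T_k(v)))+d_X(\Phi(T_k(v)),\Phi(T_{k+n}(v))).$$
Next I would rewrite the second term using the cocycle equation $T_{n+k}(v)=g_k(v)\cdot T_n(P^k(v))$ together with $T_k(v)=g_k(v)\cdot P^k(v)$. Since $\Phi$ is $\Gamma$-equivariant and $\Gamma$ acts by isometries on $X$,
$$d_X(\Phi(T_k(v)),\Phi(T_{k+n}(v)))=d_X\bigl(g_k(v)\Phi(P^k(v)),\,g_k(v)\Phi(T_n(P^k(v)))\bigr)=d_X(\Phi(P^k(v)),\Phi(T_n(P^k(v)))).$$

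Integrating over $\tau$ against $\mu^\tau$ therefore yields
$$L_{k+n}(\mu,\tau,\Phi)\le L_k(\mu,\tau,\Phi)+\int_\tau d_X(\Phi(P^k(v)),\Phi(T_n(P^k(v))))\,d\mu^\tau(v).$$
The last step is the change of variables $w=P^k(v)$. The key input here is that $\mu^\tau$ is $P$-invariant; this is the standard fact that a first-return map of a flow preserves the induced transverse measure, and it is immediate from the description $\mu^\tau(U)=\epsilon^{-1}\hat\mu(\{\phi_t(v)\mid t\in[0,\epsilon],v\in U\})$ together with invariance of $\hat\mu$ under $\phi_t$. Hence the last integral equals $L_n(\mu,\tau,\Phi)$, completing the subadditivity inequality $L_{k+n}\le L_k+L_n$.

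The only mild subtlety is making sure the substitution is justified measure-theoretically, since $T_n$ and $P^k$ are only continuous off a $\mu^\tau$-null set by Lemma \ref{lem-very good transversal}; however this is harmless because the integrands are Borel and $\mu^\tau(\partial\tau)=0$ transversals are available. Beyond that, the proof is a bookkeeping exercise and the main ``idea'' is simply the combination of equivariance with the cocycle identity. I do not anticipate a serious obstacle.
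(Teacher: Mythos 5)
Your argument is correct and follows exactly the paper's own proof: the triangle inequality through the intermediate point $\Phi(T_k(v))=\Phi(g_k(v)\cdot P^k(v))$, the cocycle identity \eqref{cocycle} combined with $\Gamma$-equivariance of $\Phi$ to rewrite the second term as $d_X(\Phi(P^k(v)),\Phi(T_n(P^k(v))))$, and then $P$-invariance of $\mu^\tau$ to identify the resulting integral with $L_n(\mu,\tau,\Phi)$. No gaps; the measure-theoretic remark at the end is a harmless extra precaution.
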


\begin{proof}
Recall that in addition to the map $T$ we also defined the first return map $P:\tau\to\tau$ of the geodesic flow and the unnamed map $g:\BN\times\tau\to\Gamma$, and that these maps are related by  equation \eqref{cocycle}, which we recall here:
$$T_{n+k}(v)=g_k(v)\cdot T_n(P^k(v)).$$
As such
\begin{align*}
d_X(\Phi(v),\Phi(T_{n+k}(v)))=
&d_X(\Phi(v),\Phi(g_k(v)\cdot T_n(P^k(v))))\\
\le& d_X(\Phi(v),\Phi(g_k(v)\cdot P^k(v)))\\
&+d_X(\Phi(g_k(v)\cdot P^k(v)),\Phi(g_k(v)\cdot T_n(P^k(v))))\\
=& d_X(\Phi(v),\Phi(T_k(v)))+d_X(\Phi(P^k(v)),\Phi(T_n(P^k(v))))
\end{align*}
where the last equation holds because of the equivariance of $\Phi$. From this we get that
\begin{align*}
L_{n+k}(\mu,\tau,\Phi)=& \int_{\tau}d_X(\Phi(v),\Phi(T_{n+k}(v)))d\mu^\tau(v)\\
\le & \int_{\tau}\left(d_X(\Phi(v),\Phi(T_k(v)))+d(\Phi(P^k(v)),\Phi(T_n(P^k(v))))\right)d\mu^\tau(v)\\
=& \int_{\tau}d_X(\Phi(v),\Phi(T_k(v)))d\mu^\tau(v)\\
&+\int_{\tau}d_X(\Phi(P^k(v)),\Phi(T_n(P^k(v))))d\mu^\tau(v)\\
=& \int_{\tau}d_X(\Phi(v),\Phi(T_k(v)))d\mu^\tau(v)+\int_{\tau}d_X(\Phi(v),\Phi(T_n(v)))d\mu^\tau(v)\\
=&L_k(\mu,\tau,\Phi)+L_n(\mu,\tau,\Phi)
\end{align*}
where the third equality holds because $P$ preserves the measure $\mu^\tau$.
\end{proof}

It follows from Lemma \ref{lem-subadditive} and the Fekete lemma that $\lim_{k\to\infty}\frac{L_k(\mu,\tau,\Phi)}k$ exists. In fact, it is independent of the chosen quasi-isometry. To see why this is the case recall that any other $\Gamma$-equivariant isometry $\Psi:\CU\to X$ is at bounded distance from $\Phi$. Let $D$ be such a bound. Then we have for all $k$ and all $v\in\tau$ that
$$\left\vert d_X(\Phi(v),\Phi(T(k,v)))-d_X(\Psi(v),\Psi(T(k,v)))\right\vert\le 2\cdot D.$$
It follows that
$$\left\vert L_k(\mu,\tau,\Phi)-L_k(\mu,\tau,\Psi)\right\vert\le 2\cdot D\cdot\mu(\tau).$$
Since this error does not depend on $k$ we get that it vanishes once we divide by $k$ and let it grow. Altogether we have:

\begin{kor}\label{kor-stable length exists}
Let $\mu\in\CC^+(\Gamma)$ be an oriented current, $\tau\subset\CU$ a transversal and $\Phi:\CU\to X$ a $\Gamma$-equivarient quasi-isometry. Then the limit
$$\CL(\mu,\tau)=\lim_{k\to\infty}\frac{L_k(\mu,\tau,\Phi)}k$$
exists and does not depend on $\Phi$. \qed
\end{kor}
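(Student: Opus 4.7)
The plan is to extract the limit from Lemma \ref{lem-subadditive} via Fekete's subadditive lemma, and then verify independence of $\Phi$ by leveraging the fact that any two $\Gamma$-equivariant quasi-isometries $\CU\to X$ lie at bounded distance from each other.

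First I would verify that the sequence $L_k(\mu,\tau,\Phi)$ is finite for all $k$, which requires $\mu^\tau(\tau)<\infty$. This holds because $\tau$ is compact and $\mu^\tau$ is locally finite (Radon), and because the integrand $v\mapsto d_X(\Phi(v),\Phi(T_k(v)))$ is bounded on $\tau$: it is continuous off the null set $\sigma_k$ given by Lemma \ref{lem-very good transversal}, and $d_X(\Phi(v),\Phi(T_k(v)))\le L\cdot d_{\CU}(v,T_k(v))+A$ for the quasi-isometry constants of $\Phi$, while $d_{\CU}(v,T_k(v))$ is itself bounded on the transversal since $T_k$ moves points by a bounded amount when the first-return time has bounded fluctuation on the compact set $\tau$. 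Given finiteness, Lemma \ref{lem-subadditive} together with Fekete's lemma immediately yields that
$$\CL(\mu,\tau)=\lim_{k\to\infty}\frac{L_k(\mu,\tau,\Phi)}{k}=\inf_{k\ge 1}\frac{L_k(\mu,\tau,\Phi)}{k}$$
exists in $[0,\infty)$.

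For independence of the quasi-isometry, I would invoke the second conclusion of the Milnor-Schwarz theorem: if $\Psi:\CU\to X$ is any other continuous $\Gamma$-equivariant quasi-isometry, then $\Phi$ and $\Psi$ are equivalent, so there exists $D\ge 0$ with $d_X(\Phi(w),\Psi(w))\le D$ for every $w\in\CU$. Applying this at $w=v$ and $w=T_k(v)$, together with the triangle inequality, gives
$$\bigl|\,d_X(\Phi(v),\Phi(T_k(v)))-d_X(\Psi(v),\Psi(T_k(v)))\,\bigr|\le 2D$$
pointwise on $\tau$. Integrating against $\mu^\tau$ yields
$$\bigl|L_k(\mu,\tau,\Phi)-L_k(\mu,\tau,\Psi)\bigr|\le 2D\cdot\mu^\tau(\tau),$$
and the right-hand side is independent of $k$. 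Dividing by $k$ and letting $k\to\infty$ shows that the two limits agree, establishing that $\CL(\mu,\tau)$ depends only on $\mu$ and $\tau$.

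The only potentially delicate step is making sure the construction really produces a finite and measurable integrand, but this is taken care of by Lemma \ref{lem-very good transversal} (choosing $\tau$ so that the discontinuity set $\sigma_k$ of $T_k$ is $\mu^\tau$-null) combined with the quasi-isometric nature of $\Phi$; no genuine obstacle appears here. The substantive work — proving that $\CL(\mu,\tau)$ in fact does not depend on $\tau$ and that it agrees with stable length on conjugacy classes and is continuous in $\mu$ — is deferred to the remainder of Section \ref{sec-meat}.
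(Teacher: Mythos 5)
Your proposal is correct and follows essentially the same route as the paper: existence of the limit via Lemma \ref{lem-subadditive} and Fekete's lemma, and independence of $\Phi$ by the fact that any two $\Gamma$-equivariant quasi-isometries $\CU\to X$ are at bounded distance $D$, giving $\vert L_k(\mu,\tau,\Phi)-L_k(\mu,\tau,\Psi)\vert\le 2D\cdot\mu^\tau(\tau)$ uniformly in $k$. Your extra remarks on finiteness and measurability of the integrand are a harmless elaboration of points the paper leaves implicit.
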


The limit in Corollary \ref{kor-stable length exists} is our candidate for the stable length $\Vert\mu\Vert_X$ of an oriented current $\mu\in\CC^+(\Gamma)$. Indeed, we note next that this is the case for currents induced by individual conjugacy classes:

\begin{lem}\label{lem-curves}
Let $\tau\subset\CU$ be a transversal. We have $\CL(\gamma,\tau)=\Vert\gamma\Vert_X$ for every $\gamma\in\CS^+(\Gamma)$.
\end{lem}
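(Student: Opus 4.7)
I first reduce to the case where $\gamma$ is primitive, which is the only substantive case: if $\gamma = \eta^n$ then by convention the associated current is $n\cdot\eta$, $\Vert\gamma\Vert_X = n\Vert\eta\Vert_X$, and $L_k(\mu,\tau,\Phi)$ is linear in $\mu^\tau$ so $\CL(\cdot,\tau)$ is linear in the measure. Assuming $\gamma$ primitive, the oriented current $\gamma$ is the $\Gamma$-invariant atomic measure on $\D^2\Gamma$ with unit mass at each point of $\Gamma\cdot(\gamma_-,\gamma_+)$. Since $\Gamma$ is torsion-free hyperbolic, the stabilizer of $(\gamma_-,\gamma_+)$ is exactly $\langle\gamma\rangle$, so these atoms are parametrized by $\Gamma/\langle\gamma\rangle$. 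Under $\CU\simeq\D^2\Gamma\times\BR$ the associated flow-invariant measure $\hat\gamma$ is Lebesgue measure on each axis $g\alpha$ (the unique flowline with endpoints $(g\gamma_-,g\gamma_+)$), hence $\gamma^\tau$ is the counting measure on the finite set $S := \tau\cap\bigcup_{g\in\Gamma/\langle\gamma\rangle} g\alpha$.

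I then analyze the $P$-dynamics on $S$ by projecting to the quotient $\CU/\Gamma$: all axes of conjugates of $\gamma$ collapse to a single closed flowline $c_\gamma$ of length $T_\gamma := \Vert\gamma\Vert_\CU$, while $\tau$ embeds onto its image $\bar\tau$ by the second defining property of a transversal. Hence $S$ maps bijectively onto $c_\gamma\cap\bar\tau$, a finite set of cardinality $N$, and the first return map $P:\tau\to\tau$ descends to the first return $\bar P:\bar\tau\to\bar\tau$, whose restriction to $c_\gamma\cap\bar\tau$ is the cyclic permutation traversing the $N$ intersection points along $c_\gamma$. Lifting, $S = \{v_1,\dots,v_N\}$ is a single $P$-orbit with $P^N(v_i) = v_i$. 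For each $v_i$ lying on the axis $\alpha_i' = g_i\alpha$ of the conjugate $\gamma_i' := g_i\gamma g_i^{-1}$, the element $\gamma_i'$ translates $\alpha_i'$ by $T_\gamma$ in the flow direction; thus the forward orbit of $v_i$ meets $\Gamma\tau$ exactly $N$ times in the half-open interval $(0,T_\gamma]$ and arrives at $\gamma_i' v_i$ at time $T_\gamma$. Consequently $T_N(v_i) = \gamma_i' v_i$, $g_N(v_i) = \gamma_i'$, and iterating the cocycle equation \eqref{cocycle} yields $T_{mN}(v_i) = (\gamma_i')^m v_i$ for all $m\ge 0$.

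With this in place the conclusion follows by computation. By $\Gamma$-equivariance of $\Phi$,
$$d_X(\Phi(v_i), \Phi(T_{mN}(v_i))) = d_X(\Phi(v_i),(\gamma_i')^m\Phi(v_i)),$$
and this divided by $m$ tends to $\Vert\gamma_i'\Vert_X = \Vert\gamma\Vert_X$ by the definition of stable length. Summing over $i=1,\dots,N$ gives
$$\frac{L_{mN}(\gamma,\tau,\Phi)}{mN} \;=\; \frac{1}{N}\sum_{i=1}^N \frac{d_X(\Phi(v_i),(\gamma_i')^m\Phi(v_i))}{m} \;\longrightarrow\; \Vert\gamma\Vert_X \quad\text{as }m\to\infty.$$
Since Corollary \ref{kor-stable length exists} guarantees that $L_k/k$ converges along all of $\BN$, this subsequential value determines the full limit, so $\CL(\gamma,\tau) = \Vert\gamma\Vert_X$. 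The main obstacle is the bookkeeping step identifying $S$ with a single $P$-orbit of cardinality $N$, for which passage to the quotient $\CU/\Gamma$ (where all axes of conjugates of $\gamma$ collapse onto the single closed flowline $c_\gamma$) is the crucial device.
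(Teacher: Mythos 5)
Your proof is correct and follows essentially the same route as the paper's: identify the support of $\gamma^\tau$ as a single finite $P$-orbit of $N$ points on translates of the axis, observe via the cocycle equation that $T_{mN}$ acts on each point as the $m$-th power of a conjugate of $\gamma$, compute $L_{mN}/(mN)\to\Vert\gamma\Vert_X$, and invoke the existence of the full limit from Corollary \ref{kor-stable length exists}. The only (cosmetic) differences are that you make the reduction to primitive elements explicit and track a conjugate $\gamma_i'$ at each support point, where the paper fixes one basepoint $v_0$ with $\gamma v_n=v_{n+k}$ and absorbs the discrepancy between the summands into a bounded error.
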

\begin{proof}
Choose a representative of the conjugacy class $\gamma$ and denote it again by $\gamma\in\Gamma$. Let $(\gamma^-,\gamma^+)\in\D^2\Gamma=\D^2\CU$ be the pair of its attracting and repelling fixed points, and recall that there is a unique orbit $\phi_t(v_0)$ of the flow $\phi_t$ with endpoints $(\gamma^-,\gamma^+)$:
$$\gamma^+=\lim_{t\to\infty}\phi_t(v_0)\text{ and }\gamma^-=\lim_{t\to-\infty}\phi_t(v_0).$$
Uniqueness implies that this flow line is periodic under $\gamma$. Note also that up to replacing $\gamma$ by a conjugate, we can assume that the orbit $\phi_t(v_0)$ meets $\tau$. Then, up to replacing $v_0$ by another point of the same orbit, we can actually assume that $v_0\in\tau$.

For $n\ge 0$ we set $v_n=T_n(v_0)$ and note that there is some $k$ with $\gamma\cdot v_{0}=v_k$. Geometrically, $k$ is the number of times that the projection of the flow line of $v_0$ to $\CU/\Gamma$ meets the projection of $\tau$. Since the flow line is $\gamma$-invariant we also have that $\gamma\cdot v_{n}=v_{n+k}$ for all $n$. Note that this implies that $\tau$ meets the $\Gamma$-translates of the flow-line $\phi_t(v_0)$ exactly at the points $v_0,g_1(v_0)^{-1}v_1,\dots,g_{k-1}(v_0)^{-1}v_{k-1}$. Hence the measure 
$$\mu^\tau=\sum_{i=0}^{k-1}\delta_{g_i(v_0)^{-1}v_i}$$ 
is the sum of Dirac measures of weight 1 centred at those points. Altogether we have
\begin{align*}
L_m(\gamma,\mu,\Phi)
&=\sum_{i=0}^{k-1}d_X(\Phi(g_i(v_0)^{-1}v_i),\Phi(T_m(g_i(v_0)^{-1}v_i)))\\
&=\sum_{i=0}^{k-1}d_X(\Phi(v_i),g_i(v_0)\Phi(T_m(g_i(v_0)^{-1}v_i)))\\
&=\sum_{i=0}^{k-1}d_X(\Phi(v_i),\Phi(v_{m+i})).
\end{align*}
Now, if $m$ is divisible by $k$ we have
\begin{align*}
L_{nk}(\gamma,\mu,\Phi)&=\sum_{i=0}^{k-1}d_X(\Phi(v_i),\Phi(v_{kn+i}))=\sum_{i=0}^{k-1}d_X(\Phi(v_i),\Phi(\gamma^nv_i))\\
&=\sum_{i=0}^{k-1}d_X(\Phi(v_i),\gamma^n\Phi(v_i)).
\end{align*}
Since the difference between $d_X(\Phi(v_i),\gamma^n\Phi(v_i))$ and $d_X(\Phi(v_0),\gamma^n\Phi(v_0))$ is bounded independently of $n$, and since the last sum above has $k$ summands, we get that there is a constant $C$ with
$$\left\vert L_{nk}(\gamma,\mu,\Phi)-k\cdot d_X(\Phi(v_0),\gamma^n\Phi(v_0))\right\vert\le C$$
for all $n$. It follows that
$$\CL(\mu,\gamma)=\lim_{n\to\infty}\frac 1{nk}L_{nk}(\gamma,\mu,\Phi)=\lim_{n\to\infty}\frac 1{nk}k\cdot d_X(\Phi(v_0),\gamma^n\Phi(v_0))=\Vert\gamma\Vert_X,$$
as we needed to prove.
\end{proof}

Our next goal is to prove that the limits in Corollary \ref{kor-stable length exists} are uniform:

\begin{lem}\label{lem-uniform}
Let $\Phi:\CU\to X$ be a $\Gamma$-equivariant quasi-isometry and $\tau\subset\CU$ a transversal. For every $\epsilon>0$ there is $n_0$ such that for every oriented current $\mu\in\CC^+(\Gamma)$ one has
$$\left\vert\CL(\mu,\tau)-\frac{L_n(\mu,\tau,\Phi)}{n}\right\vert\le\epsilon\cdot \mu^\tau(\tau)$$
for all $n\ge n_0$.
\end{lem}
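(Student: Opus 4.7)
The plan is to exploit Lemma \ref{lem-almost right} to produce a uniform ``reverse subadditivity'' for $L_{qk}$ along a geometric subsequence, which then compares to $\CL(\mu,\tau)$ with an error that is uniform in $\mu$. The ordinary subadditivity of Lemma \ref{lem-subadditive} already gives $L_n(\mu,\tau,\Phi)/n \ge \CL(\mu,\tau)$ for every $n$ (by Fekete), so the real task is a matching upper bound whose error scales with $\mu^\tau(\tau)$ only.

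First, I would collect three uniform facts. (i) Since the orbits of $\phi_t$ are quasi-isometric embeddings of $\BR$ into $\CU$ with constants independent of $v$, and $\Phi$ is a quasi-isometry, the map $t\mapsto\Phi(\phi_t(v))$ is an $(L_0,A_0)$-quasi-geodesic in $X$ with $(L_0,A_0)$ independent of $v\in\CU$. (ii) Because $\tau$ is compact, $\pi|_\tau$ is injective, and the action $\Gamma\actson\CU$ is free, the first return time satisfies $0<\epsilon_0\le\rho(1,v)\le M_0<\infty$ uniformly on $\tau$. (iii) Consequently $a_n(v):=d_X(\Phi(v),\Phi(T_n(v)))\le L_0\cdot nM_0+A_0$, so there is a constant $C$ with $L_n(\mu,\tau,\Phi)/n\le C\mu^\tau(\tau)$ for every $n$ and every $\mu$. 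I would also need the identity
$$d_X(\Phi(T_{jk}(v)),\Phi(T_{(j+1)k}(v)))=a_k(P^{jk}(v)),$$
which follows from the cocycle \eqref{cocycle} together with the $\Gamma$-equivariance of $\Phi$, exactly as in the proof of Lemma \ref{lem-subadditive}.

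Given $\epsilon>0$, let $R=R(\delta,L_0,A_0,\epsilon)$ be the constant provided by Lemma \ref{lem-almost right} for the hyperbolicity constant of $X$, and choose $k$ with $k\epsilon_0\ge R$. For any $v\in\tau$ and any $q\ge 1$ I would apply Lemma \ref{lem-almost right} to the $(L_0,A_0)$-quasi-geodesic $t\mapsto\Phi(\phi_t(v))$ on the interval $[0,\rho(qk,v)]$, with the partition $s_j:=\rho(jk,v)$. By (ii) consecutive partition points satisfy $s_{j+1}-s_j\ge k\epsilon_0\ge R$, so the lemma together with the distance-cocycle above yields
$$a_{qk}(v)\ \ge\ (1-\epsilon)\sum_{j=0}^{q-1}a_k\bigl(P^{jk}(v)\bigr).$$
Integrating against $\mu^\tau$ and using that $P$ preserves $\mu^\tau$ gives $L_{qk}(\mu,\tau,\Phi)\ge (1-\epsilon)\,q\,L_k(\mu,\tau,\Phi)$. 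Dividing by $qk$ and letting $q\to\infty$ produces $\CL(\mu,\tau)\ge (1-\epsilon)L_k(\mu,\tau,\Phi)/k$, and combined with the trivial bound $L_k/k\ge\CL(\mu,\tau)$ this yields $0\le L_k(\mu,\tau,\Phi)/k-\CL(\mu,\tau)\le \epsilon C\mu^\tau(\tau)$, uniformly in $\mu$.

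To pass from the subsequence $k,2k,3k,\dots$ to arbitrary large $n$, I would write $n=qk+r$ with $0\le r<k$, apply subadditivity to obtain $L_n(\mu,\tau,\Phi)\le qL_k(\mu,\tau,\Phi)+L_r(\mu,\tau,\Phi)$, and use the uniform bound $L_r(\mu,\tau,\Phi)\le rC\mu^\tau(\tau)\le kC\mu^\tau(\tau)$. Dividing by $n$ shows that the $r$-term contributes at most $kC\mu^\tau(\tau)/n$, which is $\le\epsilon\mu^\tau(\tau)$ for $n\ge n_0:=kC/\epsilon$. Combined with the lower bound $L_n/n\ge\CL(\mu,\tau)$ and the previous paragraph, this gives $|L_n(\mu,\tau,\Phi)/n-\CL(\mu,\tau)|\le (C+1)\epsilon\cdot\mu^\tau(\tau)$ for all $n\ge n_0$ and all $\mu$; relabelling $(C+1)\epsilon$ by $\epsilon$ finishes the proof. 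The main obstacle is the application of Lemma \ref{lem-almost right} in the third paragraph: one must aggregate $k$ successive returns so that partition points along the flow line of $v$ are $R$-separated, and here it is crucial that $\epsilon_0>0$ is \emph{uniform} over $\tau$, so that $k$ can be chosen independently of $v$ and $\mu$.
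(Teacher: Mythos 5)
Your proposal is correct and follows essentially the same route as the paper: the key step in both is to apply Lemma \ref{lem-almost right} to the image under $\Phi$ of the flow line through $v$, with partition points at the return times, to obtain the reverse inequality $L_{nk}\ge(1-\epsilon')\,k\,L_n$ after integrating with the $P$-invariance of $\mu^\tau$. The only (harmless) difference is bookkeeping: the paper makes the block length $n$ itself large so the bound holds for all $n\ge n_0$ directly, whereas you fix one block length $k$ and interpolate to general $n$ by Euclidean division and subadditivity.
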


\begin{proof}
Note that, by the existence of the limit in Corollary \ref{kor-stable length exists}, it suffices to prove that there is $n_0$ with 
$$\frac 1n\left\vert L_n(\mu,\tau,\Phi)-\lim_{k\to\infty}\frac{L_{nk}(\mu,\tau,\Phi)}k\right\vert\le\epsilon\cdot \mu^\tau(\tau)$$
for all $n\ge n_0$. Since the number in the absolute value is in fact positive due to Lemma \ref{lem-subadditive}, it suffices to bound it from above by $\epsilon\cdot \mu^\tau(\tau)$.

Let $\epsilon'$ be small and $K$ large; how small and how large will be determined later on. Armed with $K$, choose $n_0$ with $d_{\CU}(v,T_{n}(v))\ge K$ for all $v\in\tau$ and for all $n\geq n_0$ (such an $n_0$ exists by compactness of $\tau$). We have that also
$$d_{\CU}\left(T_{n(i-1)}(v),T_{ni}(v)\right)\ge K\text{ for all }v\in\tau,\ n\ge n_0\text{ and }i=1,\dots,k.$$
Now, we can assume, using Lemma \ref{lem-almost right}, that $K$ was chosen large enough so that 
$$(1-\epsilon')\sum_{i=1}^{k}d_X(\Phi\left(T_{n(i-1)}(v)),\Phi(T_{ni}(v))\right)\leq d_X\left(\Phi(v), \Phi(T_{nk}(v))\right)$$
for all $v\in\tau$ and all $n\ge n_0$. Now \eqref{cocycle} implies that
\begin{align*}
&\Phi(T_{n(i-1)}(v))=\Phi(g_{n(i-1)}(v)P^{n(i-1)}(v))=g_{n(i-1)}(v)\Phi(P^{n(i-1)}(v))\\
&\Phi(T_{ni}(v))=\Phi(g_{n(i-1)}(v)T_n(P^{n(i-1)}(v)))=g_{n(i-1)}(v)\Phi(T_n(P^{n(i-1)}(v)))
\end{align*}
and thus that
$$d_X\left(\Phi(T_{n(i-1)}(v)), \Phi(T_{ni}(v))\right)=d_X\left(\Phi(P^{n(i-1)}(v)),\Phi(T_n(P^{n(i-1)}(v)))\right)$$
for all $i=1,\ldots k$ and all $v\in\tau$. Integrating over $\tau$ we get
\begin{align*}
L_{nk}(\mu,\tau,\Phi)&=\int_{\tau}d_X\left(\Phi(v),\Phi(T_{nk}(v))\right)d\mu^{\tau}(v)\\
&\ge(1-\epsilon')\cdot \int_{\tau}\sum_{i=1}^{k}d_X\left(\Phi(P^{n(i-1)}(v)),\Phi(T_n(P^{n(i-1)}(v)))\right)d\mu^{\tau}(v)\\
&=(1-\epsilon')\cdot \sum_{i=1}^{k}\int_{\tau}d_X\left(\Phi(v), \Phi(T_n(v))\right)d\mu^{\tau}(v)\\
& =(1-\epsilon')\cdot  k \int_{\tau}d_X\left(\Phi(v), \Phi(T_n(v))\right)d\mu^{\tau}(v)\\
&=(1-\epsilon')\cdot  k\cdot L_n(\mu, \tau, \Phi).\\ 
\end{align*}
In particular we have for all $n\ge n_0$ and all $k$ that
$$\frac 1n\left(L_n(\mu,\tau,\Phi)-\frac{L_{nk}(\mu,\tau,\Phi)}k\right)<\epsilon'\cdot \frac{1}{n}L_n(\mu,\tau,\Phi)< \epsilon'\cdot L_1(\mu,\tau,\Phi)$$
where the last inequality follows from Lemma \ref{lem-subadditive}. 

If we set $M=\max_{v\in\tau}\left\{d_X\left(\Phi(v),\Phi(T(v))\right)\right\}$ we get that
$$\frac 1n\left(L_n(\mu,\tau,\Phi)-\frac{L_{nk}(\mu,\tau,\Phi)}k\right)<\epsilon'\cdot \frac{1}{n}L_n(\mu,\tau,\Phi)< \epsilon'\cdot M\cdot\mu_\tau(\tau).$$ 
The claim now follows by choosing $\epsilon'=\frac\epsilon M$.
\end{proof}

Armed with Lemma \ref{lem-uniform} we can now prove that for every oriented current there exists a transversal $\tau$ such that $\CL(\cdot,\tau)$ is continuous at that point:

\begin{lem}\label{lem-continuity}
For every $\mu_0\in\CC^+(\Gamma)$ there is a transversal $\tau$ such that $\CL(\cdot,\tau)$ is continuous at $\mu_0$. 
\end{lem}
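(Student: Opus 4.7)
The plan is to reduce continuity of $\CL(\cdot,\tau)$ at $\mu_0$ to continuity of the integrals $L_n(\cdot,\tau,\Phi)$ at $\mu_0$ for each fixed $n$, and then transfer this via the uniform convergence provided by Lemma \ref{lem-uniform}. The first move is to exploit the freedom in choosing the transversal: apply Lemma \ref{lem-very good transversal} to $\mu_0$ to obtain a transversal $\tau \subset \CU$ with $\mu_0^\tau(\D\tau)=0$, and for each $n \in \BN$ a closed set $\sigma_n \subset \tau$ with $\mu_0^\tau(\sigma_n)=0$ on whose complement $T_n$ is continuous. Also fix a $\Gamma$-equivariant continuous quasi-isometry $\Phi:\CU\to X$.

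Next I would verify that the pull-back map $\mu \mapsto \mu^\tau$ is continuous at $\mu_0$ in the weak-* topology on $\tau$. The endpoint map $\tau \to \D^2\Gamma$, $v \mapsto (v^+,v^-)$, is a continuous injection of the compact set $\tau$, and the condition $\mu_0^\tau(\D\tau)=0$ says that $\mu_0$ places no mass on the boundary of the image. Standard portmanteau considerations then give: if $\mu_k \to \mu_0$ in $\CC^+(\Gamma)$, then $\mu_k^\tau \to \mu_0^\tau$ weakly on $\tau$.

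With that in hand, I would show that for each fixed $n$ the function $\mu \mapsto L_n(\mu,\tau,\Phi)$ is continuous at $\mu_0$. The integrand $f_n(v) = d_X(\Phi(v),\Phi(T_n(v)))$ is bounded on the compact set $\tau$, since the return times $\rho(n,\cdot)$ are bounded on $\tau$ by cocompactness of $\Gamma \actson \CU$ and $\Phi$ is a continuous quasi-isometry. On $\tau \setminus \sigma_n$ the map $T_n$ is continuous, hence so is $f_n$. Because $\mu_0^\tau(\sigma_n) = 0$, the portmanteau theorem applied to the weakly convergent measures $\mu_k^\tau \to \mu_0^\tau$ yields $L_n(\mu_k,\tau,\Phi) \to L_n(\mu_0,\tau,\Phi)$. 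In particular $\mu^\tau(\tau)$ itself is continuous in $\mu$ at $\mu_0$ (take the constant integrand $1$).

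Finally, combine the two ingredients. Given $\epsilon > 0$, Lemma \ref{lem-uniform} supplies an $n \geq n_0$ with
$$\left|\CL(\mu,\tau) - \tfrac{1}{n}L_n(\mu,\tau,\Phi)\right| \leq \epsilon \cdot \mu^\tau(\tau)$$
for every $\mu \in \CC^+(\Gamma)$. Applying this at both $\mu$ and $\mu_0$, using that $\mu^\tau(\tau)$ stays bounded near $\mu_0$, and using the continuity of $\frac{1}{n}L_n(\cdot,\tau,\Phi)$ established above, the triangle inequality gives $\limsup_{\mu \to \mu_0}|\CL(\mu,\tau) - \CL(\mu_0,\tau)| \leq 2\epsilon \cdot \mu_0^\tau(\tau)$. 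Since $\epsilon$ was arbitrary, continuity at $\mu_0$ follows. The main obstacle is the second step: the pull-back of a weakly convergent sequence of currents on $\D^2\Gamma$ to measures on $\tau$ is not automatically weakly convergent, and it is precisely the assumption $\mu_0^\tau(\D\tau) = 0$ (together with the a.e.\ continuity of $T_n$) that lets the portmanteau theorem push through.
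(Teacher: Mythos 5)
Your proposal is correct and follows essentially the same route as the paper: choose the transversal via Lemma \ref{lem-very good transversal}, use $\mu_0^\tau(\D\tau)=0$ to get weak convergence of the induced measures on $\tau$, deduce continuity of each $L_n(\cdot,\tau,\Phi)$ at $\mu_0$ from the almost-everywhere continuity of $v\mapsto d_X(\Phi(v),\Phi(T_n(v)))$, and conclude with the uniform estimate of Lemma \ref{lem-uniform} and the triangle inequality. The extra remarks you supply (boundedness of the integrand via cocompactness, continuity of $\mu\mapsto\mu^\tau(\tau)$) are correct and only make explicit points the paper leaves implicit.
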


\begin{proof}
Recall that by Lemma \ref{lem-very good transversal} (and from the equivalence between oriented currents and measures invariant under the flow $\phi_t$), there is a transversal $\tau\subset\CU$ with $\mu_0^\tau(\D\tau)=0$ and such that for all $n$ there is a closed set $\sigma_n\subset\tau$ with $T_n(\cdot)$ continuous on $\tau\setminus\sigma_n$ and with $\mu_0^\tau(\sigma_n)=0$.

Suppose that $\left(\mu_i\right)$ is a sequence of measures converging to $\mu_0$. The condition $\mu_0^\tau(\D\tau)=0$ implies that the measures $\mu_i^\tau$ converge to the measure $\mu_0^\tau$. Now fix $n$. The fact that $T_n(\cdot)$ is continuous outside of a closed set of vanishing $\mu_0^\tau$-measure, together with the continuity of the quasi-isometry $\Phi$, implies that 
$$\tau\to\BR,\ \ v\mapsto d_X\left(\Phi(v), \Phi(T_n(v))\right)$$
is also continuous outside of a closed set of vanishing $\mu_0^\tau$-measure. This shows that $\int d_X\left(\Phi(v), \Phi(T_n(v))\right)d\mu_i^\tau$ converges to $\int d_X\left(\Phi(v), \Phi(T_n(v))\right)d\mu_0^\tau$, which in our terminology just means that
$$\lim_{i\to\infty}L_n(\mu_i,\tau,\Phi)=L_n(\mu_0,\tau,\Phi)$$
for all $n$. 

Let $\epsilon>0$ and chose $n$ larger than the $n_0$ given by Lemma \ref{lem-uniform}. For all sufficiently large $i$ we have
$$\mu_i^\tau(\mu_i)\le 2 \mu_0^\tau(\tau)$$
and 
$$\vert L_n(\mu_i,\tau,\Phi)-L_n(\mu_0,\tau,\Phi)\vert\le\epsilon.$$
From Lemma \ref{lem-uniform} and the triangular inequality we get that 
$$\left\vert \CL(\mu_0,\tau)- \CL(\mu_i,\tau) \right\vert \le(1+3\mu^{\tau_0}(\tau_0))\cdot\epsilon.$$
Since $\epsilon$ was arbitrary, the claim follows.
\end{proof}

We are finally ready to prove Theorem \ref{sat-length}:

\begin{proof}[Proof of Theorem \ref{sat-length}]
We consider first the stable length function defined on $\BR_+\CS^+(\Gamma)$ by $\Vert t\gamma\Vert_X=t\cdot\Vert\gamma\Vert_X$. Since Bonahon \cite{Bonahon-currents-group} proved that $\BR_+\CS^+(\Gamma)$ is dense in $\CC^+(\Gamma)$, we can consider the associated limsup and liminf functions
$$f^{\sup}, f^{\inf}: \CC^+(\Gamma)\to\mathbb{R}^+.$$ 
These are the functions given by
$$f^{\sup}(\mu)=\limsup\Vert t\gamma\Vert_X\text{ and }f^{\inf}(\mu)=\liminf\Vert t \gamma\Vert_X,$$
where the limits are taken over $t\gamma\in\BR_+\CS^+(\Gamma)$ with $t\gamma\to\mu$. We have to prove that $f^{\sup}(\mu)=f^{\inf}(\mu)$ for all $\mu$.

To see that this is the case fix $\mu\in\CC^+(\Gamma)$ and let $\tau$ be a transversal provided by Lemma \ref{lem-continuity}. Also let $(t_i\gamma_i)$ be an arbitrary sequence in $\mathbb{R}_+\CS^+(\Gamma)$ converging to $\mu$ and such that
$$f^{\sup}(\mu)=\lim_{i\to\infty}\Vert t_i\gamma_i\Vert_X$$
Then we have 
$$f^{\sup}(\mu)= \lim_{i\to\infty}t_i\Vert\gamma_i\Vert_X=\lim_{i\to\infty}t_i\CL(\gamma_i,\tau)=\lim_{i\to\infty}\CL(t_i\gamma_i,\tau)=\CL(\mu,\tau)$$
where the second equality holds by Lemma \ref{lem-curves}, the third because $\CL(\cdot,\tau)$ is homogenous and the fourth because $\mu$ is a point of continuity of $\CL(\cdot,\tau)$ by the choice of $\tau$. The same computation for $f^{\inf}$ shows that indeed
$$f^{\sup}(\mu)=\CL(\mu,\tau)=f^{\inf}(\mu).$$
We define this common value to be $\Vert\mu\Vert_X$ and notice that by construction the so defined function $\Vert\cdot\Vert:\CC^+(\Gamma)\to\BR_+$ is continuous. Density of $\BR_+\CS^+(\Gamma)$ ensures that in fact it is the unique continuous extension. Moreover, the so defined stable length extends the stable length on $\BR_+\CS^+(\Gamma)$. To see this note that to compute $\Vert t\gamma\Vert_X$ we can just consider the constant sequence $(t\gamma)$. In particular, the homogeneity of the stable length on $\BR_+\CS^+(\Gamma)$ is inherited by the stable length defined on the whole of $\CC^+(\Gamma)$. Similarly, noting that 
$$\Vert\gamma\Vert_X=\lim_{n\to\infty}\frac 1nd_X(x,\gamma^nx)=\lim_{n\to\infty}\frac 1nd_X(\gamma^{-n}x,x)=\Vert\gamma^{-1}\Vert_X=\Vert\text{flip}(\gamma)\Vert_X,$$
we obtain that the stable length function 
$$\Vert\cdot\Vert_X:\CC^+(\Gamma)\to\BR_+$$
is also flip-invariant. We have proved Theorem \ref{sat-length}.
\end{proof}

\section{}\label{sec-counting}

In this section we prove Theorems \ref{sat1} and \ref{sat2}, and explain how Corollaries \ref{kor-rational}, \ref{kor-riemannian}, and \ref{kor-wordlength} follow. 

\subsection{Currents on surfaces} 
Suppose that $\Sigma$ is a compact hyperbolic surface of genus $g$ and with $r$ boundary components. Let $\Gamma=\pi_1(\Sigma)$ be its fundamental group and $X$ a geodesic metric space on which $\Gamma$ acts discretely and cocompactly by isometries. We identify curves in $\Sigma$, homotopy classes of curves in $\Sigma$, and conjugacy classes in $\Gamma$.  Recall that $\CS(\Sigma)$ denotes the set of all curves in $\Sigma$ and $\CS_{\gamma_0}=\Map(\Sigma)\cdot\gamma_0$ is the set of all curves of type $\gamma_0$. We also denote by $\CC(\Sigma)=\CC(\pi_1(\Sigma))$ the space of currents associated to the hyperbolic group $\pi_1(\Sigma)$. There are several reasons why, in the case of surface groups, the natural thing to consider are currents instead of oriented currents. Recall in any case that we can identify currents with flip-invariant oriented currents. In particular, Theorem \ref{sat-length} still holds if we consider currents instead of oriented currents. 


Besides the currents associated to curves $\gamma\in\CS(\Sigma)$, we will be interested in another prominent class of currents. Recall that a measured lamination is a geodesic lamination endowed with a transverse measure of full support. As such, a measured lamination is also a current. In fact, a current $\lambda\in\CC(\Sigma)$ is a measured lamination if and only if $\iota(\lambda,\lambda)=0$, that is
$$\CM\CL=\{\lambda\in\CC(\Sigma)\,\vert\,\iota(\lambda,\lambda)=0\}.$$
Here $\iota(\cdot,\cdot)$ is the intersection form on $\CC(\Sigma)$, the unique continuous bihomogenous extension to $\CC(\Sigma)\times\CC(\Sigma)$ of the geometric intersection number \cite{Bonahon86,Bonahon88}. 

Anyways, being a subset of $\CC(\Sigma)$, the space $\CM\CL=\CM\CL(\Sigma)$ of measured laminations has an induced topology. In fact, Thurston proved that $\CM\CL(\Sigma)$ is homeomorphic to $\BR^{6g-6+2r}$. The space $\CM\CL$ does not only have a natural topology, but also a compatible mapping class group invariant PL-manifold structure. In fact, the PL-manifold $\CM\CL$ is in endowed with a mapping class group invariant symplectic structure and hence with a mapping class group invariant measure in the Lebesgue class. This measure is the so-called {\em Thurston measure} $\mu_{\Thu}$. It is an infinite but locally finite measure, positive on non-empty open sets, and satisfies
$$\mu_{\Thu}(L\cdot U)=L^{6g-6+2r}\cdot\mu_{\Thu}(U)$$ 
for all $U\subset\CM\CL$ and $L>0$. See for example \cite{Casson-Bleiler} for definitions and facts about laminations and measured laminations and \cite{Penner-Harer} for a construction of the Thurston measure $\mu_{\Thu}$ and of  the PL-structure on $\CM\CL$. Note that, considering $\CM\CL$ as a subset of the space of currents, we can consider $\mu_{\Thu}$ as a measure on $\CC(\Sigma)$.

In our setting, the Thurston measure will appear (up to multiplicative error) as the limit of the measures
$$\nu_L^{\gamma_0}=\frac 1{L^{6g-6+2r}}\sum_{\gamma\in\CS_{\gamma_0}}\delta_{\frac 1L\gamma}$$
when $L\to\infty$. Here $\delta_x$ stands for the Dirac measure centered at $x$. To see what $\mu_{\Thu}$ actually has to do with the measures $\nu_L=\nu_L^{\gamma_0}$ recall first that by \cite[Proposition 4.1]{VJ} we have that the family $\nu_L$ is precompact and that any limit is a multiple of the Thurston measure. Suppose then that $\nu_{L_n}$ converges to $C\cdot\mu_{\Thu}$. We claim that $C$ does not depend on the chosen sequence. From \cite[Proposition 4.3]{VJ} we get that the convergence $\nu_{L_n}\to C\cdot\mu_{\Thu}$ implies that for any filling current $\lambda$ we have
$$\lim_{n\to\infty}\frac{\vert\{\gamma\in\CS_{\gamma_0}\,\vert\,\iota(\lambda,\gamma)\le L_n\}\vert}{L_n^{6g-6+2r}}=C\cdot\mu_{\Thu}(\{\alpha\in\CM\CL\,\vert\,\iota(\lambda,\alpha)\le 1\})$$
where $\iota(\cdot,\cdot)$ is still the intersection form on $\CC(\Sigma)$. If we apply this fact to the Liouville current $\lambda_X$ of a hyperbolic surface we get, writing $\ell_X$ for the hyperbolic length, that
\begin{equation}\label{eq-hyp-lim}
\lim_{n\to\infty}\frac{\vert\{\gamma\in\CS_{\gamma_0}\,\vert\,\ell_X(\gamma)\le L_n\}\vert}{L_n^{6g-6+2r}}=C\cdot\mu_{\Thu}(\{\alpha\in\CM\CL\,\vert\,\ell_X(\alpha)\le 1\}).
\end{equation}
It is however a theorem of Mirzakhani \cite{Maryam2} that the limit on the left is independent of the sequence $L_n\to\infty$. This implies that the quantity on the right, and in particular $C$, is also independent of the chosen sequence $(L_n)$. We have proved:

\begin{sat}\label{limit measures}
For every curve $\gamma_0\in\CS(\Sigma)$ there is a constant $C_{\gamma_0}>0$ with $\lim_{L\to\infty}\nu_L^{\gamma_0}=C_{\gamma_0}\cdot\mu_{\Thu}.$\qed
\end{sat}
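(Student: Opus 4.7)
The plan is to leverage the precompactness result of \cite[Proposition 4.1]{VJ}, which asserts that the family $(\nu_L^{\gamma_0})_{L>0}$ is precompact in the weak-$*$ topology on the space of measures on $\CC(\Sigma)$ and, moreover, that every subsequential limit is a non-negative scalar multiple of the Thurston measure $\mu_{\Thu}$. The task therefore reduces to showing that this scalar $C_{\gamma_0}$ is the same \emph{positive} constant, independent of the extracted subsequence; once this is done, a standard precompactness-plus-unique-limit argument upgrades subsequential convergence to convergence of the full family.

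First, I would fix an arbitrary sequence $L_n\to\infty$ and, appealing to precompactness, pass to a subsequence (still denoted $L_n$) along which $\nu_{L_n}\to C\cdot\mu_{\Thu}$ for some $C\geq 0$. Using \cite[Proposition 4.3]{VJ} applied to the Liouville current $\lambda_X$ of a fixed auxiliary hyperbolic metric on $\Sigma$, this weak convergence translates into the numerical identity \eqref{eq-hyp-lim}, namely
$$\lim_{n\to\infty}\frac{\vert\{\gamma\in\CS_{\gamma_0}\,\vert\,\ell_X(\gamma)\le L_n\}\vert}{L_n^{6g-6+2r}}=C\cdot\mu_{\Thu}(\{\alpha\in\CM\CL\,\vert\,\ell_X(\alpha)\le 1\}).$$

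Next, I would invoke Mirzakhani's theorem \cite{Maryam2}, which guarantees that the left-hand side converges along the full parameter $L\to\infty$, and in particular that its limit does not depend on the chosen subsequence $(L_n)$. Since $\mu_{\Thu}(\{\alpha\in\CM\CL\,\vert\,\ell_X(\alpha)\le 1\})$ is a strictly positive, finite constant (finite because this sub-level set is compact in $\CM\CL$, positive because it contains a non-empty open set), the scalar $C$ is forced to coincide with the ratio of these two quantities, which is a quantity intrinsic to $\gamma_0$. Positivity of $C_{\gamma_0}$ then follows because Mirzakhani's limit is strictly positive for every essential curve type.

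The main obstacle, and indeed the only substantive point, is the extraction of the numerical identity from weak-$*$ convergence: the function $\mu\mapsto\mu(\{\alpha\,\vert\,\iota(\lambda_X,\alpha)\le 1\})$ is not continuous on $\CC(\Sigma)$, so one has to check that the boundary $\{\alpha\,\vert\,\iota(\lambda_X,\alpha)=1\}$ carries zero $\mu_{\Thu}$-mass in order to apply the portmanteau theorem. This is precisely what is handled by \cite[Proposition 4.3]{VJ}, so once that black box is in hand the rest of the argument is bookkeeping.
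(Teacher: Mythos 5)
Your proposal is correct and follows the same route as the paper: precompactness from \cite[Proposition 4.1]{VJ}, the identification of any subsequential limit as $C\cdot\mu_{\Thu}$, the passage to the numerical identity \eqref{eq-hyp-lim} via \cite[Proposition 4.3]{VJ} applied to a Liouville current, and Mirzakhani's theorem \cite{Maryam2} to pin down $C$ independently of the subsequence. Your closing remark on the boundary-mass issue is exactly the content delegated to \cite[Proposition 4.3]{VJ}, so nothing is missing.
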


\begin{bem}
The argument we just gave to prove Theorem \ref{limit measures} appeared in \cite{VJ}, although the result was as such never formally stated in the said paper. A more general version of Theorem \ref{limit measures}---which applies if we replace $\gamma_0$ by an arbitrary current, but at this point only to closed surfaces---was recently given by Rafi and the third author of this paper \cite{Kasra-Juan}. 
\end{bem}

Let us be a bit more careful with the constants. In \cite{Maryam2}, Mirzikhani proves that there is a rational constant $n_\gamma\in\BQ$ such that the limit \eqref{eq-hyp-lim} is equal to the product of 
$$n_{\gamma_0}\cdot\mu_{\Thu}(\{\alpha\in\CM\CL\,\vert\,\ell_X(\alpha)\le 1\})\cdot b_{\Sigma}^{-1}$$
where $b_{\Sigma}$ is the integral of the function $X\mapsto\mu_{\Thu}(\{\alpha\in\CM\CL\,\vert\,\ell_X(\alpha)\le 1\})$ over moduli space endowed with the Weil-Peterson metric. It follows that the constant $C_{\gamma_0}$ in Theorem \ref{limit measures} can be expressed as $C_{\gamma_0}=n_{\gamma_0}\cdot b_{\Sigma}^{-1}.$

\subsection{Proofs of Theorems \ref{sat1} and \ref{sat2}}

We are now ready to prove Theorem \ref{sat2}. We recall its statement: 

\begin{named}{Theorem \ref{sat2}}
Let $\Sigma$ be a compact surface with genus $g$ and $r$ boundary components and assume that $3g+r > 3$. Let also $\pi_1(\Sigma)\actson X$ be a discrete and cocompact isometric action on a geodesic metric space $X$, $\gamma_0$ an essential curve in $\Sigma$, and $\CS_{\gamma_0}=\Map(\Sigma)\cdot\gamma_0$ the set of all curves of type $\gamma_0$. Then the limit
$$\lim_{L\to\infty}\frac 1{L^{6g-6+2r}}\vert\{\gamma\in\CS_{\gamma_0}\text{ with }\Vert\gamma\Vert_X\le L\}\vert$$
exists and is positive. 
\end{named}

\begin{proof}
By Theorem \ref{sat-length} the stable length extends to a continuous, homogenous function $\Vert\cdot\Vert_X: \CC(\Sigma)\to\mathbb{R}_+$. From its homogeneity we have
$$\vert\{\gamma\in\CS_{\gamma_0}\vert\Vert\gamma\Vert_X\le L\}\vert=\left\vert\left\{\gamma\in\CS_{\gamma_0}\middle\vert\left\Vert\frac 1L\gamma\right\Vert_X\le 1\right\}\right\vert.$$
Recalling the measures 
$$\nu_L^{\gamma_0}=\frac 1{L^{6g-6+2r}}\sum_{\gamma\in\CS_{\gamma_0}}\delta_{\frac 1L\gamma}$$
on the space of currents $\CC(\Sigma)$ that we considered above, we can thus write
$$\frac 1{L^{6g-6+2r}}\vert\{\gamma\in\CS_{\gamma_0}\,\vert\,\Vert\gamma\Vert_X\le L\}\vert=\nu_L^{\gamma_0}\left(\left\{\lambda\in\CC(\Sigma) \,\vert\,\Vert\lambda\Vert_X\le 1\right\}\right).$$
The set $\left\{\lambda\in\CC(\Sigma) \,\vert\,\Vert\lambda\Vert_X\le 1\right\}$ is closed. Moreover, the homogeneity of $\Vert~\cdot~\Vert_X$ and the scaling behavior of the Thurston measure $\mu_{\Thu}$, imply that 
$$\mu_{\Thu}(\D \left(\left\{\lambda\in\CC(\Sigma)\, \vert\,\Vert\lambda\Vert_X\le 1\right\}\right))=0.$$
Since the measures $\nu_L^{\gamma_0}$ converge to $C_{\gamma_0}\cdot\mu_{\Thu}$ by Theorem \ref{limit measures}, we get thus that the $\nu_L^{\gamma_0}$ measures of $\left\{\lambda\in\CC \,\vert\,\Vert\lambda\Vert_X\le 1\right\}$ converge to its $C_{\gamma_0}\cdot\mu_{\Thu}$ measure. Altogether we get that
$$\lim_{L\to\infty}\frac 1{L^{6g-6+2r}}\vert\{\gamma\in\CS_{\gamma_0}\,\vert\,\Vert\gamma\Vert_X\le L\}\vert=C_{\gamma_0}\cdot\mu_{\Thu}\left(\left\{\lambda\in\CC(\Sigma)\, \vert\,\Vert\lambda\Vert_X\le 1\right\}\right).$$
We have proved the theorem.
\end{proof}

Recall that by Lemma \ref{lem-compare lengths} we have, for all $\epsilon$,
$$\ell_X(\gamma)\ge\Vert\gamma\Vert_X\ge(1-\epsilon)\cdot\ell_X(\gamma)$$
for every curve $\gamma$ with $\ell_X(\gamma)$ large enough. Using this together with Theorem \ref{sat2} above, Theorem \ref{sat1} is almost immediate: 

\begin{named}{Theorem \ref{sat1}}
Let $\Sigma$ be a compact surface with genus $g$ and $r$ boundary components and assume that $3g+r> 3$. Let also $\pi_1(\Sigma)\actson X$ be a discrete and cocompact isometric action on a geodesic metric space $X$, $\gamma_0$ an essential curve in $\Sigma$, and $\CS_{\gamma_0}=\Map(\Sigma)\cdot\gamma_0$ the set of all curves of type $\gamma_0$. Then the limit
$$\lim_{L\to\infty}\frac 1{L^{6g-6+2r}}\vert\{\gamma\in\CS_{\gamma_0}\,\vert\,\ell_X(\gamma)\le L\}\vert$$
exists and is positive. 
\end{named}

\begin{proof}
We will prove that the limit agrees with the limit in Theorem \ref{sat1}. In fact there is nothing to be proved because Lemma \ref{lem-compare lengths} implies that
\begin{align*}
\liminf_{L\to\infty}\frac{\vert\{\gamma\in\CS_{\gamma_0}\vert\ell_X(\gamma)\le L\}\vert}{L^{6g-6+2r}}&\ge\lim_{L\to\infty}\frac{\vert\{\gamma\in\CS_{\gamma_0}\vert\Vert\gamma\Vert_X\le L\}\vert}{L^{6g-6+2r}}\\
&\ge (1-\epsilon)^{6g-g+2r}\limsup_{L\to\infty}\frac{\vert\{\gamma\in\CS_{\gamma_0}\vert\ell_X(\gamma)\le L\}\vert}{L^{6g-6+2r}}
\end{align*}
for all $\epsilon>0$. The claim follows.
\end{proof}

Note that in the proof of Theorem \ref{sat1} we have actually proved that the limit therein agrees with the limit in Theorem \ref{sat2}. In particular, from the proof of Theorem \ref{sat2} we know that we actually have
$$\lim_{L\to\infty}\frac 1{L^{6g-6+2r}}\vert\{\gamma\in\CS_{\gamma_0}\,\vert\,\ell_X(\gamma)\le L\}\vert=C_{\gamma_0}\cdot\mu_{\Thu}\left(\left\{\lambda\in\CC(\Sigma)\, \vert\,\Vert\lambda\Vert_X\le 1\right\}\right).$$
This means that if $\eta$ and $\eta'$ are curves then we have
$$\lim_{L\to\infty}\frac{\vert\{\gamma\in\CS_{\eta}\,\vert\,\ell_X(\gamma)\le L\}\vert}{\vert\{\gamma\in\CS_{\eta'}\,\vert\,\ell_X(\gamma)\le L\}\vert}=\frac{C_\eta}{C_{\eta'}}$$
Now, as mentioned after the proof of Theorem \ref{limit measures} we have that $C_{\eta}=n_{\eta}\cdot b_{\Sigma}^{-1}$ and correspondingly $C_{\eta'}=n_{\eta'}\cdot b_{\Sigma}^{-1}$. Taking all of this together we obtain Corollary \ref{kor-rational}:

\begin{named}{Corollary \ref{kor-rational}}
Let $\Sigma$ be a compact surface with genus $g$ and $r$ boundary components and assume that $3g+r> 3$. For every curve $\eta$ in $\Sigma$ there is $n_{\eta}\in\BQ$ such that
$$\lim_{L\to\infty}\frac{\vert\{\gamma\in\CS_{\eta}\,\vert\,\ell_X(\gamma)\le L\}\vert}{\vert\{\gamma\in\CS_{\eta'}\,\vert\,\ell_X(\gamma)\le L\}\vert}=\frac{n_\eta}{n_{\eta'}}$$
for any two essential curves $\eta,\eta'$ in $\Sigma$ and any discrete and cocompact action $\pi_1(\Sigma)\actson X$.\qed
\end{named}

We now discuss other consequences of Theorem \ref{sat1}. Note that, as already mentioned in the introduction, Corollary \ref{kor-wordlength} follows immediately from Theorem \ref{sat1} by setting $X$ to be the Cayley graph $C(\pi_1(\Sigma),S)$. Also, if $\Sigma$ is closed and $\rho$ a Riemannian metric on $\Sigma$ then it follows directly from Theorem \ref{sat1} that the limit
$$\lim_{L\to\infty}\frac 1{L^{6g-6}}\vert\{\gamma\in\CS_{\gamma_0}\,\vert\,\ell_\rho(\gamma)\le L\}\vert$$
exists and is positive. Corollary \ref{kor-riemannian} asserts that this is also the case if the surface $\Sigma$ has boundary and if $\rho$ is a complete metric on its interior. This is what we prove next.

\subsection{Proof of Corollary \ref{kor-riemannian}}
Suppose that $\Sigma$ is as in the statement of Theorem \ref{sat1}, denote by $\Sigma^0=\Sigma\setminus\D\Sigma$ its interior and let $\rho$ be a complete Riemannian metric on $\Sigma^0$. The basic difficulty when trying to derive Corollary \ref{kor-riemannian} from Theorem \ref{sat1} is that the action of $\pi_1(\Sigma)=\pi_1(\Sigma^0)$ on the universal cover of $(\Sigma^0,\rho)$ is not cocompact. Our aim is to replace this action by some cocompact action. What we really need to prove is that all the relevant geodesics are contained in a fixed compact set:

\begin{lem}\label{lem-riemann}
For any essential curve $\gamma_0$ in $\Sigma$, there exists a compact subsurface $(\Sigma_K,\rho)$ of $(\Sigma^0,\rho)$ such that any minimal length curve freely homotopic to some element in $\CS_{\gamma_0}$ is entirely contained in $(\Sigma_K,\rho)$.
\end{lem}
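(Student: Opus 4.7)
Fix a compact deformation retract $\Sigma_0\subset\Sigma^0$ whose complement decomposes as a disjoint union $\bigsqcup_{i=1}^r C_i$ of open collar neighborhoods of the ends, each parameterized as $S^1\times(0,\infty)$ with $\D C_i$ identified with $S^1\times\{0\}\subset\D\Sigma_0$. Since $\rho$ is complete and every $\gamma\in\CS_{\gamma_0}$ is essential and non-peripheral, any length-minimizing sequence of representatives remains in a compact subset of $\Sigma^0$ (each boundary excursion costs a definite amount of length), so by Arzel\`a--Ascoli a minimum-length representative $\gamma^*$ exists, realized as a closed $\rho$-geodesic.

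Writing $\gamma^*\cap C_i$ as a disjoint union of $\rho$-geodesic arcs $\alpha_1^{(i)},\dots,\alpha_{k_i}^{(i)}$ with endpoints on $\D C_i$ and wrapping numbers $w_j^{(i)}\in\BZ$ in the annulus $C_i$, I would argue uniform bounds, over $\CS_{\gamma_0}$, on both $k_i$ and each $|w_j^{(i)}|$. The number of arcs $k_i$ is controlled by the topological intersection number $\iota(\gamma,c_i)$, which is $\Map(\Sigma)$-invariant up to permutation of the boundary components and hence uniformly bounded. For the wrappings, the signed winding numbers $W_i(\gamma)=\sum_j w_j^{(i)}$ are likewise $\Map(\Sigma)$-invariants up to sign and permutation; the key additional observation is that for the $\rho$-minimum $\gamma^*$, all wrappings $w_j^{(i)}$ in a given collar must share the same sign, because any opposite-sign pair would allow a length-decreasing modification by reorganizing the wrappings and using $\D C_i$ as a shortcut. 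Consequently $\sum_j|w_j^{(i)}|=|W_i(\gamma)|$ is uniformly bounded, and in particular each $|w_j^{(i)}|\le W_0=W_0(\gamma_0)$.

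Once the wrapping bound $W_0$ is in hand, a direct length comparison closes the argument. For each arc $\alpha_j^{(i)}$ of wrapping $w$, the explicit path wrapping $|w|$ times along $\D C_i$ and then running along $\D C_i$ to the correct endpoint is homotopic rel endpoints to $\alpha_j^{(i)}$ and has $\rho$-length at most $(|w|+1)\,\ell_\rho(\D C_i)$. Minimality of $\gamma^*$ then gives $\ell_\rho(\alpha_j^{(i)})\le(W_0+1)\,\ell_\rho(\D C_i)$, and since any arc reaching $\rho$-depth $D$ in $C_i$ has length at least $2D$, each arc stays within depth $D_0:=(W_0+1)\,\ell_\rho(\D C_i)/2$. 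Setting $\Sigma_K=\Sigma_0\cup\bigsqcup_i\{x\in C_i:\mathrm{dist}_\rho(x,\D C_i)\le D_0\}$ yields the desired compact subsurface. The main obstacle is the same-sign assertion for the wrappings: since individual wrappings are not free-homotopy invariants, the argument must genuinely use the minimality of $\gamma^*$ to rule out opposite-sign cancellations, and constructing the required local shortening in an arbitrary complete Riemannian metric on $C_i$ is the delicate technical point.
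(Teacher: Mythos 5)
Your overall architecture (decompose the minimizer into collar arcs, bound the wrapping of each arc by a $\Map(\Sigma)$-invariant, then convert the wrapping bound into a depth bound by comparing with boundary-hugging arcs) matches the paper's, and your final step---$\ell_\rho(\alpha_j^{(i)})\le(|w|+1)\ell_\rho(\D C_i)$ against ``reaching depth $D$ costs length $2D$''---is exactly the paper's closing computation. The gap is the step you yourself flag, and it is not merely delicate: the same-sign assertion is false. The proposed shortening ``reorganizes the wrappings'' of two arcs $a_1$ (wrapping $+w$) and $a_2$ (wrapping $-w$), i.e.\ replaces $a_1$ by $a_1\beta^{-k}$ and $a_2$ by $\beta^{k}a_2$; but if $\gamma=u\,a_1\,v\,a_2$ this yields $u\,a_1(\beta^{-k}v\beta^{k})a_2$, which is freely homotopic to $\gamma$ only when the intervening word $v$ commutes with the boundary class $\beta$. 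In general the reorganization changes the free homotopy class, so minimality of $\gamma^*$ gives no contradiction. Concretely: on a one-holed torus whose interior carries a hyperbolic metric with a cusp, the geodesic representative of $\delta^{N}a\delta^{-N}b$ (with $\delta$ the boundary class) makes two excursions into the cusp with wrappings roughly $+N$ and $-N$; here $W_i=0$ while $\sum_j|w_j^{(i)}|\approx 2N$, and the geodesic goes arbitrarily deep as $N\to\infty$. These curves lie in different $\Map(\Sigma)$-orbits, which is why the lemma survives, but it shows that $|W_i|$ is the wrong invariant to control depth.

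The paper's fix is to bound the wrapping by the self-intersection number rather than the winding number. By Freedman--Hass--Scott, a $\rho$-length-minimizer realizes the minimal self-intersection number $I$ of its free homotopy class, and $I$ is constant on the orbit $\CS_{\gamma_0}$. An arc in an embedded annulus with both endpoints on one boundary circle that wraps $k$ times must have at least $k-1$ transverse self-intersections (an embedded such arc is boundary-parallel, hence wraps at most once), and these are self-intersections of the minimizer; hence $k\le I+1$ uniformly over the orbit. Substituting this bound for your $W_0$ in the final length comparison completes the proof; the winding numbers $W_i$ cannot do the job.
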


Note that for a complete hyperbolic surface the action always happens in the convex core of the surface. Still in the hyperbolic world, the convex core fails to be compact only in the presence of cusps and it is well-known that closed geodesics that go far up a cusp must wind many times around the cusp. This is turn creates self-intersections but the self-intersection number only depends on the type of the curve and not on its geodesic realization. This proves Lemma \ref{lem-riemann} for hyperbolic metrics. We  apply basically the same argument in the general case, using the fact that curves of minimal length on Riemannian surfaces realize minimal intersection number---this is a result of Freedman, Hass and Scott \cite{FreedmanHassScott}. 

\begin{proof}
We begin by fixing a smooth compact subsurface, say $(\Sigma',\rho)$, of $(\Sigma^0,\rho)$ homeomorphic to $\Sigma$. It can be obtained in such a way that $(\Sigma^0,\rho) \setminus (\Sigma',\rho)$ is a collection of $r$ cylinders. Denote by $\beta_1, \hdots,\beta_r$ the boundary curves of $(\Sigma',\rho)$. 

\begin{figure}[h]
{
\leavevmode \SetLabels
\L(.48*.35) $\beta_i$\\%
\L(.49*.05) $\Sigma'$\\%
\endSetLabels
\begin{center}
\AffixLabels{\centerline{\epsfig{file =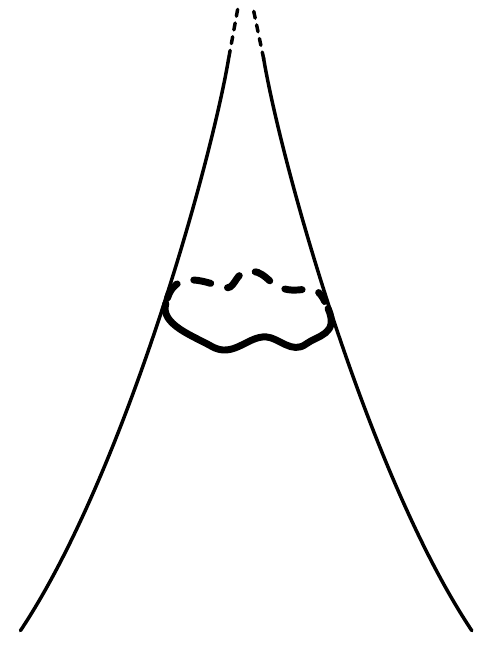,width=2.0cm,angle=0}}}
\vspace{-24pt}
\end{center}
}
\caption{The subsurface $\Sigma'$} \label{fig:Cusp1}
\end{figure}

For each $\gamma \in \CS_{\gamma_0}$ we choose a single minimal length closed geodesic, which we denote $\gamma_\rho$, in its homotopy class. As noted above, $\gamma_\rho$ enjoys a minimal intersection property \cite{FreedmanHassScott}, namely $\iota(\gamma_\rho,\gamma_\rho)$ is minimal among all curves in the same free homotopy class. All elements in the mapping class group orbit $\CS_{\gamma_0}$ have the same self-intersection number and thus so do any of their minimal length representatives. We denote this number by $I$ below.

If a given $\gamma_\rho$ is not entirely contained in $(\Sigma',\rho)$, then the restriction of $\gamma_\rho$ to each cylinder of $(\Sigma^0,\rho) \setminus (\Sigma',\rho)$ is a collection of geodesic arcs. Consider the cylinder of boundary $\beta_i$ and a geodesic subarc $a$ of $\gamma_\rho$ in the cylinder with both endpoints on $\beta_i$. By minimality of the length of $\gamma_\rho$, $a$ is of minimal length among all possible homotopic arcs with the same endpoints. Furthermore, the homotopy type of arc is entirely determined by the number of times $a$ wraps around the cylinder (and in which direction). In particular, $a$ is homotopic (with fixed endpoints) to any arc which is entirely contained in $\beta_i$ (seen as a set of points) which wraps the same number of times around the boundary.

\begin{figure}[h]
{
\leavevmode \SetLabels
\L(.50*.0) $\beta_i$\\%
\L(.52*.34) $a$\\%
\endSetLabels
\begin{center}
\AffixLabels{\centerline{\epsfig{file =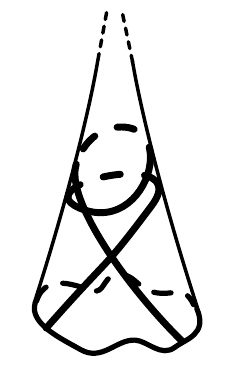,width=2.0cm,angle=0}}}
\vspace{-24pt}
\end{center}
}
\caption{An arc $a$ that wraps twice around the boundary} \label{fig:Cusp2}
\end{figure}

Further note that $a$ cannot wrap more than $I$ times around the cylinder, otherwise the self-intersection number of $a$ and thus of $\gamma_\rho$, would be too large. In particular this implies that $\ell(a) \leq (I+2) \ell(\beta_i)$ and is thus strictly contained in the $\frac{I+2}{2} \ell(\beta_i)$ neighborhood of $\beta_i$. If we set $C=\max_{i}\left\{\frac{I+2}{2} \ell(\beta_i)\right\}$ then $\gamma_\rho$ is entirely contained in the $C$ neighborhood of  $\Sigma'$. This is the desired compact set.
\end{proof}

We are now ready to prove Corollary \ref{kor-riemannian}:

\begin{named}{Corollary \ref{kor-riemannian}}
With $\Sigma$ as in Theorem \ref{sat1}, let $\rho$ be a complete Riemannian metric on $\Sigma_0=\Sigma\setminus\D\Sigma$. Then for every essential curve $\gamma_0$ the limit
$$\lim_{L\to\infty}\frac 1{L^{6g+2r-6}}\vert\{\gamma\in\CS_{\gamma_0}\,\vert\,\ell_\rho(\gamma)\le L\}\vert$$
exists and is positive. Here $\ell_\rho(\gamma)$ is minimum of the lengths with respect to $\rho$ over all curves freely homotopic to $\gamma$.
\end{named}
\begin{proof}
Let $\Sigma_K$ be as provided by Lemma \ref{lem-riemann} and note that we can assume without loss of generality that the inclusion $\Sigma_K\hookrightarrow \Sigma$ is a homotopy equivalence. Let $X$ be the universal cover of $(\Sigma^0,\rho)$ and $X'$ that of $(\Sigma_K,\rho)$ and note that there is an embedding $X'\to X$ which is equivariant under the deck-transformation actions of $\pi_1(\Sigma)=\pi_1(\Sigma^0)=\pi_1(\Sigma_K)$. 

The action of $\pi_1(\Sigma)$ on $X'$ is cocompact. In particular, it follows from Theorem \ref{sat1} that the limit
$$\lim_{L\to\infty}\frac 1{L^{6g-6+2r}}\vert\{\gamma\in\CS_{\gamma_0}\,\vert\,\ell_{X'}(\gamma)\le L\}\vert$$
exists and is positive. On the other hand, since $\Sigma_K$ contains every minimizing geodesic in $(\Sigma,\rho)$ homotopic to any curve $\gamma$ of type $\gamma_0$, we get that
$$\ell_\rho(\gamma)=\ell_X(\gamma)=\ell_{X'}(\gamma).$$
for any $\gamma\in\CS_{\gamma_0}$. Altogether, it follows that the limit
$$\lim_{L\to\infty}\frac 1{L^{6g-6+2r}}\vert\{\gamma\in\CS_{\gamma_0}\,\vert\,\ell_\rho(\gamma)\le L\}\vert$$
also exists and is positive, as we needed to prove.
\end{proof}

\end{document}